\definecolor{grey}{rgb}{0.5,0.5,0.5}
\newcommand{\Prod}[1]{\langle #1 \rangle}
\newcommand{\op}[1]{\operatorname{#1}}
\newcommand{\R}{\mathbb{R}}
\newcommand{\st}{\mathrel\vert}
\newcommand{\alphamin}{\alpha_{\min}}
\newcommand{\alphamax}{\alpha_{\max}}
\newcommand{\tildealpha}{\tilde{\alpha}}
\newcommand{\tildealphamin}{\tilde{\alpha}_{\min}}
\newcommand{\tildealphamax}{\tilde{\alpha}_{\max}}
\newcommand{\sigmamax}{\|\sigma\|_{L^\infty(\Omega)}}
\newcommand{\tildesigma}{\tilde{\sigma}}
\newcommand{\tildesigmamax}{\|\tildesigma\|_{L^\infty(\Omega)}}
\newcommand{\dataweight}{\delta^{-1}}
\newcommand{\invdataweight}{\delta}
\newcommand{\Eapprox}{\mathbb{E}_{\op{approx}}}
\newcommand{\Epde}{\mathbb{E}_{\op{pde}}}
\newcommand{\Edata}{\mathbb{E}_{\op{data}}}
\newcommand{\FINT}{{\textstyle\fint}}
\newcommand{\FUNDING}{
Funded by the Deutsche Forschungsgemeinschaft (DFG, German Research Foundation)
under Germany´s Excellence Strategy – The Berlin Mathematics Research Center MATH+
(EXC-2046/1, project ID: 390685689).
Project EF3-4 ``Physics-regularized learning''.
}
\newcommand{\ABSTRACT}{
  In this work we consider the regularization of a supervised
  learning problem by partial differential equations (PDEs)
  and derive error bounds for the obtained approximation
  in terms of a PDE error term and a data error term.
  Assuming that the target function satisfies an unknown PDE,
  the PDE error term quantifies how well this PDE is approximated
  by the auxiliary PDE used for regularization.
  It is shown that this error term decreases if more data
  is provided.
  The data error term quantifies the accuracy of the given data.
  Furthermore, the PDE-regularized learning problem is discretized
  by generalized Galerkin discretizations solving
  the associated minimization problem in subsets of the
  infinite dimensional functions space, which are not
  necessarily subspaces.
  For such discretizations an error bound in terms of
  the PDE error, the data error, and a best approximation
  error is derived.
}
\newtheorem{theorem}{Theorem}
\newtheorem{lemma}{Lemma}
\newtheorem{corollary}{Corollary}
\newtheorem{proposition}{Proposition}
\newtheorem{definition}{Definition}
\newtheorem{remark}{Remark}
\begin{document}

\title{Error bounds for PDE-regularized learning}

\author[Gr\"aser]{Carsten Gr\"aser$^1$}
\address{
  $^1$Institut f\"ur Mathematik,
  Freie Universit\"at Berlin,
  14195 Berlin, Germany
}
\email{graeser@mi.fu-berlin.de}

\author[Alathur Srinivasan]{Prem Anand Alathur Srinivasan$^2$}
\address{
  $^2$Institut f\"ur Mathematik,
  Freie Universit\"at Berlin,
  14195 Berlin, Germany
}
\email{premanand@zedat.fu-berlin.de}

\thanks{\FUNDING}

\begin{abstract}
  \ABSTRACT
\end{abstract}

\date{}

\maketitle

\section{Introduction}

The problem of learning an unknown function $u:\Omega \to \R$
is one of the central problems in the field of machine learning.
A classical ansatz is to minimize the risk functional
\begin{align*}
  \tilde{u} \mapsto |\Omega|^{-1} \int_\Omega L(\tilde{u}(x),u(x))\,dx
\end{align*}
in some ansatz set $V_h$ for a loss functional $L : \R^2 \to \R$.
In case of a quadratic loss functional
this leads to an $L^2(\Omega)$-best-approximation problem
for $u$ in $V_h \subset L^2(\Omega)$.

Solving this problem in general requires complete knowledge
of $u\in L^2(\Omega)$.
In practice the available data on $u$ is often incomplete.
A classical example of incomplete data is a finite set
of known point data $(p_1,u(p_1)),\dots,(p_m,u(p_m))$
for a continuous function $u$.
In this case, instead of minimizing the risk, one often
considers the problem of \emph{empirical risk minimization}
given by
\begin{align}
  \label{eq:general_learning_problem}
  \tilde{u}_h = \op{argmin}_{v \in V_h} \frac{1}{m}\sum_{i=1}^m |v(p_i)-u(p_i)|^2.
\end{align}
Here, the weighted sum can be viewed as Monte-Carlo approximation
of the $L^2(\Omega)$-norm using the sample set $\{p_1,\dots,p_m\}$.

Recently a lot of attention has been paid to learning with
shallow or deep neural networks. In this case
$V_h \subset L^2(\Omega)$ is given as the range
$V_h = \Phi(\R^N)$
of a nonlinear map $\Phi : \R^N \to L^2(\Omega)$
over a finite dimensional parameter domain $\R^N$, such that
the empirical risk minimization problem in $V_h$
can be written as
\begin{align}
  \label{eq:nn_fitting}
  W^* = \op{argmin}_{W\in \R^N} \frac{1}{m}\sum_{i=1}^m |\Phi(W)(p_i)-u(p_i)|^2
\end{align}
with $\tilde{u}_h = \Phi(W^*)$.
The minimizer $W^*$ is often approximated using stochastic
gradient-type algorithms.
While this approach has been used with reasonable success
in practice, its theoretical understanding is still in its
infancy. The central question is, if the total error, i.e.
the difference $\tilde{u}_h-u$ of the computed approximation
and the target function can be controlled.
When analyzing this error, several aspects have to be taken
into account.

\textbf{Learning:} Here the question is if we can control
the error made by the algebraic solution method, e.g.
stochastic gradient descent.
If the algebraic error $W-W^*$ of a computed approximate
parameter set $W$ can be controlled, local Lipschitz continuity
of the parametrization $\Phi$ allows to control
the induced error $\Phi(W)-\Phi(W^*)$ of the associated functions.
A major obstacle for an error analysis is, that the
learning problem~\eqref{eq:nn_fitting} is in general
nonconvex and may have multiple global or local minimizers,
saddle points, or plateaus.
One direction to target this problem is to characterize
network architectures, where local optimality implies global
optimality or at least plateaus~(see, e.g., \cite{HaeffeleVidal2017},~\cite{VidalEtAl2017}
and the references therein).
Another direction is to interpret a (stochastic) gradient flow
for shallow neural networks as evolution of an interacting
particle system which allows to relate long time limits
(i.e. stationary points) and the many particle limit~\cite{RotskoffVandenEijnden2018}.
In the present paper we do not consider this aspect
and concentrate on the analysis of global minimizers.

\textbf{Expressivity:} This centers around the question
of how well $u$ can be approximated in $V_h$?
Starting from early results on universal approximation properties
of neural networks (see, e.g. \cite{Funahashi1989, Pinkus1999})
there was significant recent progress on deriving
best approximation error and expressivity bounds for
deep neural networks
\cite{
  BolcskeiEtAl2019,
  BurgerNeubauer2001,
  GribovalEtAl2019,
  GrohsEtAl2018,
  GuehringKuyniokPetersen2019,
  PerekrestenkoEtAl2018,
  PetersenVoigtlaender2018,
  ShenYangZhang2019,
  Yarotsky2017}.
An additional question is, to which extend it is possible
to realize the theoretically derived best approximation error
by solving the learning problem.
Despite its importance this question is largely unexplored
which is rooted in the fact that it can hardly be answered
due to ill-posedness in case of incomplete data.
In the present paper we will address this question
by regularizing the problem and proving discretization error
bounds for the minimizer $\tilde{u}_h$ in terms of the best
approximation error for $u$ in $V_h$.

\textbf{Generalization:}
This addresses the question if the trained function generalizes
from the training data to other input data. In mathematical terms
this leads to the question of how well the computed function
$\tilde{u}_h$ can approximate $u$ on $\Omega$
given incomplete data at some points
$\{p_1,\dots,p_m\} \subsetneqq \Omega$.
This question is often discussed in a statistical setting considering
the input data as randomly drawn samples.
In the present paper we will take a deterministic perspective
and are interested in error bounds for $\tilde{u}_h-u$ in terms
of the amount of provided data. Again such bounds can in general
not be derived due to ill-posedness, which we will address by a regularization
of the problem.

The fact that it is hard to derive error bounds%
---even for global minimizers of the learning problem---%
is deeply related to the fact that problem~\eqref{eq:general_learning_problem}
which only incorporates incomplete data
is in general ill-posed leading to severe artifacts.
For example, if the ansatz set $V_h$ is
'large' in comparison to the available amount of data,
one observes so called \emph{overfitting},
where $\tilde{u}$ matches $u$ nicely in the points $p_i$ but
fails to provide a reasonable approximation in other parts of $\Omega$.
This
is in fact a consequence of ill-posedness
\interfootnotelinepenalty=10000
\footnote{%
  This can be easily
  explained in the case of approximation by polynomials
  $V_h = \mathcal{P}_{n}$ of degree $n$.
  If $n=m-1$, then the resulting $\tilde{u}$ is exactly
  the interpolation polynomial which in general exhibits
  severe over- and undershoots. If $n\geq m$ then there is no
  unique solution and one can add an oscillatory
  polynomial of degree $n$ with arbitrary amplitude
  to the interpolation polynomial leading to uncontrollably large errors.
}.
A common technique to reduce such artifacts is to introduce regularization terms for $W$.
However, it is unclear how the influence of such regularization
on the error $\Phi(W^*) -u = \tilde{u}_h-u$ with respect to the $L^2(\Omega)$-norm
could be analyzed.
Another obstacle in deriving error bounds for $\tilde{u}_h-u$
is, that it is unclear,
how to understand~\eqref{eq:general_learning_problem} as discretization
of a continuous problem and thus how such a discretization
and the influence of incomplete data could be analyzed.
In the present paper we target these questions by
introducing regularizations
of the learning problem
by partial differential equations (PDEs).
This is based on the assumption that additional
knowledge on the process generating the data or at least
on the smoothness of $u$ is available.
Using such regularizations we derive a framework
which allows to prove error bounds for
$\tilde{u}_h-u$ that quantify the effect of incomplete
data and relate the discretization error in nonlinear
ansatz spaces (like neural networks) to their approximation properties.

Due to their approximation power, neural networks
have already been proposed for the solution of partial
different equations (PDEs).
In~\cite{EYu2018} the authors introduce the \emph{deep Ritz} method
which approximates the solution of a variational formulation
of a PDE using discretization by deep neural networks.
In contrast to~\eqref{eq:nn_fitting} this approach
minimizes the Dirichlet energy associated to the PDE
in the nonlinear neural network space using
a simple penalty approach for essential boundary data.
As a variant, it was proposed to minimize the consistent
penalty formulation from Nitsche's method using a
heuristic penalty parameter~\cite{LiaoMing2019}.
While error bounds for both methods are unknown so far
a convergence result based on $\Gamma$-convergence was recently derived~\cite{MuellerZeinhofer2019}.
A different approach was taken in~\cite{RaissiPerdikarisKarniadakis2017a},
where the authors introduce so called
\emph{physics informed neural networks} (PINNs)
which are trained using a collocation least squares functional.
This ansatz is extended in~\cite{RaissiPerdikarisKarniadakis2017b}
where, additionally to the PDE, point data of the target function
is incorporated.
Replacing collocation by a least squares Petrov--Galerkin
ansatz leads to the \emph{variational PINN}
approach considered in~\cite{KharazmiZhangKarniadakis2019}.
It has also been highlighted that a neural network ansatz
is especially promising for high-dimensional
PDEs~\cite{EYu2018,HanJentzenE2018,GrohsEtAl2018,LiaoMing2019}.
Other uses of neural networks in the context of PDEs e.g.
include reduced basis methods for parametrized problems~\cite{KutyniokEtAl2019}.

\textbf{Our contribution:}
In the present paper
we will in general assume that $u$ solves an elliptic PDE
which is not known exactly.
Since we cannot use the unknown exact PDE,
an inexact auxiliary PDE is used to regularize the problem.
Furthermore, to make the problem
well-posed in a Lebesgue- and Sobolev-space setting, we first replace
the point data $u(p_i)$ by local averages $\FINT_{B_i}u \approx f(p_i)$
on sets $B_i \subset \Omega$ with $p_i \in B_i$
leading to the regularized learning problem
\begin{align*}
  \tilde{u} = \op{argmin}_{v \in V}
    \frac12 \sum_{i=1}^m |B_i| \left| \FINT_{B_i} v- \FINT_{B_i} u\right|^2
    + \invdataweight \left(\frac12 \tilde{a}(v,v) - \tilde{\ell}(v) \right).
\end{align*}
Here, $\frac12 \tilde{a}(v,v) - \tilde{\ell}(v)$ is the
Dirichlet energy associated to the elliptic auxiliary PDE and
$\invdataweight>0$ is a regularization parameter that balances
the data and PDE term.
The main result of the paper is an error bound
of the form
\begin{align*}
  \|\tilde{u} - u\|_{L^2(\Omega)}
    \leq CR^2 \Epde.
\end{align*}
Here $R$ is a constants that can be decreased
by adding more data and $\Epde$ quantifies the error
induced by using an inexact auxiliary PDE.
Hence the accuracy of $\tilde{u}$ can be improved
by either providing more data or by improving the exactness
of the auxiliary PDE.
In a second step we treat the case of given point values $u(p_i)$
by considering $u(p_i)$ as an inexact variant of
$\FINT_{B_i} u$.
Assuming $\lambda$-Hölder-continuity of $u$ we
can control the additional error by an error bound
\begin{align*}
  \|\tilde{u} - u\|_{L^2(\Omega)}
    \leq CR^2 \Epde + C r^\lambda
\end{align*}
where $r$ can again be decreased by adding more data.

Finally, using a nonlinear C\'ea-Lemma, we derive a bound
\begin{align*}
  \|\tilde{u}_h - u\|_{L^2(\Omega)}
    \leq CR^2 \Epde + C r^\lambda
    + C\inf_{v\in V_h} \Bigl(R \|\nabla(v-u)\|_{L^2(\Omega)} + \|v-u\|_{L^2(\Omega)} \Bigr)
\end{align*}
for the case of a generalized Galerkin discretization where
$\tilde{u}_h$ is computed by minimizing in a
subset $V_h$ of $V$.
A variant for inexact minimizers is also presented.
Since the result allows for non-subspace subsets $V_h \subset V$,
it is in principle applicable to nonlinear approximation
schemes like neural networks.
Inserting known approximation error bounds
(e.g. from~\cite{GuehringKuyniokPetersen2019})
on the right hand side,
this leads to a discretization error bound
for neural network discretizations under the assumption
that a global minimizer can be computed, or, that
the algebraic energy error can be controlled.
While the latter can in general not be guaranteed,
the derived results open a new perspective for
an error analysis of neural networks.
For example, the same arguments can be used
to provide a-priori error bounds for
the deep Ritz method~\cite{EYu2018} for Neumann problems
with exact (or controlled inexact) global minimizers
which allows to quantify recent convergence results~\cite{MuellerZeinhofer2019}.

The paper is organized as follows:
First the PDE-regularized learning problem
is introduced and its well-posedness is discussed
in Section~\ref{sec:problem}.
Then an $L^2(\Omega)$ error bound for $\tilde{u}-u$
is derived in Section~\ref{sec:error_analysis}
in the infinite dimensional case.
This section also discussed the quasi-optimality
of the derived error bound for a pure data fitting
problem without a-priori knowledge on the PDE.
The generalized Galerkin discretization in subsets $V_h$
is introduced and analyzed in Section~\ref{sec:discretization}.
Finally, the theoretical findings are illustrated
by numerical experiments with finite element and
neural network discretizations in Section~\ref{sec:numerical_results}.

\section{Problem setting}
\label{sec:problem}

\subsection{Exact and inexact auxiliary PDEs}
We are interested in approximating a function
$u:\Omega \to \R$ on a bounded domain $\Omega \subset \R^d$
with Lipschitz boundary.
Throughout the paper we make the assumption that
$u$ solves an elliptic partial differential equation (PDE)
given in terms of a variational equation
\begin{align}
  \label{eq:exact_problem}
  u\in V:
    \qquad
    a(u,v) = \ell(v)
    \qquad \forall v \in V
\end{align}
for a closed subspace $V \subset H^1(\Omega)$ with $H_0^1(\Omega) \subset V$,
a symmetric bilinear form
\begin{align*}
  a(w,v)
    \colonequals \int_\Omega \alpha(x) \nabla w(x) \cdot \nabla v(x)
    + \sigma(x) w(x) v(x) \,dx
\end{align*}
with uniformly bounded coefficient functions
$\alpha :\Omega \to \R$ and $\sigma :\Omega \to \R$,
\begin{align*}
  0 < \alphamin \leq \alpha(x) \leq \alphamax < \infty, \qquad
  0 \leq \sigma(x) \leq \sigmamax < \infty
\end{align*}
and $\ell \in V^*$ given by
\begin{align*}
  \ell(v) \colonequals \int_\Omega f(x)\, dx
\end{align*}
for some $f\in L^2(\Omega)$.
The subspace $V$ is chosen such that $a(\cdot,\cdot)$
is coercive on $V$. Then \eqref{eq:exact_problem}
has a unique solution $u\in V$
by the Lax--Milgram theorem.

The basic assumption we will make is, that the PDE is not known
exactly, that is, the exact functions $\alpha,\sigma,\ell$ are unknown.
Instead we will consider an inexact auxiliary PDE induced
by a guess for $\alpha$, $\sigma$, and $f$.
The auxiliary PDE is given in terms a bilinear form
\begin{align*}
  \tilde{a}(w,v)
    \colonequals \int_\Omega \tildealpha(x) \nabla w(x) \cdot \nabla v(x)
    + \tildesigma(x) w(x) v(x) \,dx
\end{align*}
with uniformly bounded coefficient functions
$\tildealpha :\Omega \to \R$ and $\tildesigma :\Omega \to \R$,
\begin{align*}
  0 < \tildealphamin \leq \tildealpha(x) \leq \tildealphamax < \infty, \qquad
  0 \leq \tildesigma(x) \leq \tildesigmamax < \infty
\end{align*}
and a functional $\tilde{\ell} \in V^*$ given by
\begin{align*}
  \tilde{\ell}(v) \colonequals \int_\Omega \tilde{f}(x)\, dx
\end{align*}
for some $\tilde{f}\in L^2(\Omega)$.

\subsection{PDE-regularized learning problem}

To compute $\tilde{u}$ we will not just solve $\tilde{a}(\tilde{u},\cdot)-\tilde{\ell}=0$
but combine this PDE with the given (possibly inexact)
local data on $u$.
To this end we assume that the data is given in terms of local
average values of $u$ on open, nonempty subsets $B_i \subset \Omega$
for $i=1,\dots,m$.
We will also assume that the sets $B_i$ can be extended
to form a covering of $\Omega$ in the sense that for each
$B_i$ there is a convex set $K_i$ with Lipschitz boundary
and $B_i \subset K_i$ such that
\begin{align*}
  \Omega \subset \bigcup_{i=1}^m \overline{K_i}.
\end{align*}
The maximal overlap and the maximal diameter
of the families $(B_i)$ and $(K_i)$ given by
\begin{align*}
  M \colonequals \max_{x \in \Omega} |\{ i \st x \in K_i\}|,
  \qquad
  r \colonequals \max_{i=1,\dots,m} \op{diam}(B_i),
  \qquad
  R \colonequals \max_{i=1,\dots,m} \op{diam}(K_i)
\end{align*}
will be used to quantify errors later on.

In the following we will use the notation
$\FINT_U v = |U|^{-1}\int_U v(x)\, dx$ for the average
of $v$ over a bounded set $U$.
Using a regularization parameter $\invdataweight>0$ we define
define the augmented auxiliary forms
\begin{align*}
  \tilde{c}(w,v) \colonequals \tilde{a}(w,v) + \dataweight b(w,v),
  \qquad
  \tilde{r}(v) \colonequals \tilde{\ell}(v) + \dataweight \widetilde{b(u,v)}.
\end{align*}
with the bilinear form $b(\cdot,\cdot)$
and the possibly inexact data term $\widetilde{b(u,\cdot)}\approx b(u,\cdot)$ given by
\begin{align}
  \label{eq:data_term}
  b(w,v) \colonequals \sum_{i=1}^m |B_i| \FINT_{B_i} w \FINT_{B_i} v,
  \qquad
  \widetilde{b(u,v)} \colonequals \sum_{i=1}^m b_i|B_i|\FINT_{B_i} v.
\end{align}
This data term can be viewed as an approximation of $b(u,v)$
which becomes exact for $b_i = \FINT_{B_i} u$.
Using this notation we introduce
the PDE-regularized learning problem
\begin{align}
  \label{eq:augmented_auxiliary_problem_min}
  \tilde{u} = \op{argmin}_{v\in V} \tilde{J}(v),
\end{align}
for the functional
\begin{align}
  \label{eq:functional}
  \tilde{J}(v) \colonequals \frac12 \tilde{c}(v,v) - \tilde{r}(v)
    = \frac12 \sum_{i=1}^m |B_i| \left| \FINT_{B_i} v- b_i\right|^2
    + \delta\Bigl(\frac12 \tilde{a}(v,v) - \tilde{l}(v)\Bigr) + \op{const}.
\end{align}
Using standard arguments we see, that this quadratic
minimization problem is equivalent to
\begin{align}
  \label{eq:augmented_auxiliary_problem}
  \tilde{u} \in V:
  \qquad
  \tilde{c}(\tilde{u},v) = \tilde{r}(v)
  \qquad
  \forall v \in V.
\end{align}
Analogously to \eqref{eq:augmented_auxiliary_problem}
we can define the augmented exact problem
\begin{align}
  \label{eq:augmented_exact_problem}
  u\in V:
  \qquad
  c(u,v) = r(v)
  \qquad
  \forall v \in V
\end{align}
for the exact augmented forms
\begin{align*}
  c(w,v) \colonequals a(w,v) + \dataweight b(w,v),
  \qquad
  r(v) \colonequals \ell(v) + \dataweight b(u,v).
\end{align*}
It is straight forward to show that \eqref{eq:augmented_exact_problem}
is equivalent to \eqref{eq:exact_problem}.

\subsection{Well-posedness by PDE-based regularization}
We now discuss well-posedness of the PDE-regularized learning
problem.
For convenience we will denote the semi-norm
$v \mapsto d(v,v)^{\frac12}$ induced by a symmetric,
positive semi-definite, bilinear form $d(\cdot,\cdot)$
by $\|\cdot\|_d = d(\cdot,\cdot)^{\frac12}$.
Using this notation it is clear that
\begin{align*}
  \|\cdot\|^2_{\tilde{c}} &= \|\cdot\|^2_{\tilde{a}} + \dataweight \|\cdot\|^2_b, &
  \|\cdot\|^2_c &= \|\cdot\|^2_a + \dataweight \|\cdot\|^2_b.
\end{align*}
The following lemma shows that the weighting of the terms in
$b(\cdot,\cdot)$ is natural in the sense that it guarantee that
$\|\cdot\|_b$ scales like $\|\cdot\|_{L^2(\Omega)}$.

\begin{lemma}
  \label{lem:data_form_L2_cont}
  The bilinear form $b(\cdot,\cdot)$ is $L^2(\Omega)$-continuous with
  \begin{align*}
    b(w,v) \leq M \|w\|_{L^2(\Omega)} \|v\|_{L^2(\Omega)}, \qquad
    \|v\|_b^2 \leq M \|v\|^2_{L^2(\Omega)}, \qquad
    \forall w,v \in L^2(\Omega).
  \end{align*}
\end{lemma}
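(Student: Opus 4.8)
The plan is to bound the bilinear form $b(w,v)$ directly from its definition as a finite sum of products of local averages, using the Cauchy--Schwarz inequality twice: once for each individual average and once for the sum over $i$. The Cauchy--Schwarz inequality for an integral over $B_i$ gives
\begin{align*}
  |B_i|\,\Bigl|\FINT_{B_i} w\Bigr|\,\Bigl|\FINT_{B_i} v\Bigr|
    \leq |B_i|\, \Bigl(|B_i|^{-1}\|w\|_{L^2(B_i)}\Bigr)\Bigl(|B_i|^{-1}\|v\|_{L^2(B_i)}\Bigr)
    = \|w\|_{L^2(B_i)}\|v\|_{L^2(B_i)},
\end{align*}
so each summand in $b(w,v)$ is controlled by the product of the $L^2$-norms of $w$ and $v$ over $B_i$. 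Summing over $i$ and applying the discrete Cauchy--Schwarz inequality yields
\begin{align*}
  b(w,v) \leq \sum_{i=1}^m \|w\|_{L^2(B_i)}\|v\|_{L^2(B_i)}
    \leq \Bigl(\sum_{i=1}^m \|w\|_{L^2(B_i)}^2\Bigr)^{1/2}
         \Bigl(\sum_{i=1}^m \|v\|_{L^2(B_i)}^2\Bigr)^{1/2}.
\end{align*}

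The remaining step is to relate the sum of squared local $L^2$-norms back to the global $L^2(\Omega)$-norm, and this is where the overlap constant $M$ enters. Since $B_i \subset K_i$ and $\Omega \subset \bigcup_i \overline{K_i}$, the sum $\sum_i \mathbf{1}_{B_i}(x) \leq \sum_i \mathbf{1}_{K_i}(x) \leq M$ for every $x \in \Omega$ by the definition of $M$ as the maximal overlap of the $(K_i)$. Therefore
\begin{align*}
  \sum_{i=1}^m \|w\|_{L^2(B_i)}^2
    = \int_\Omega |w(x)|^2 \sum_{i=1}^m \mathbf{1}_{B_i}(x)\, dx
    \leq M \|w\|_{L^2(\Omega)}^2,
\end{align*}
and likewise for $v$. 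Combining the two displays gives $b(w,v) \leq M\|w\|_{L^2(\Omega)}\|v\|_{L^2(\Omega)}$, and setting $w=v$ yields $\|v\|_b^2 = b(v,v) \leq M\|v\|_{L^2(\Omega)}^2$.

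I do not expect any genuine obstacle here, as the argument is two applications of Cauchy--Schwarz followed by a counting estimate. The only point requiring mild care is the bookkeeping on the averaging factors $|B_i|^{-1}$: one must check that the weight $|B_i|$ appearing in front of the product in the definition of $b$ cancels exactly against the two $|B_i|^{-1}$ factors from the averages, leaving a clean product of unnormalized $L^2(B_i)$-norms rather than averaged ones. Once this cancellation is verified the rest follows mechanically, and the factor $M$ (rather than a larger constant) is obtained precisely because the characteristic functions of the $B_i$ are dominated pointwise by those of the covering sets $K_i$.
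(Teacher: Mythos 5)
Your argument is correct and follows essentially the same route as the paper's proof: Cauchy--Schwarz on each $B_i$ to bound each summand by $\|w\|_{L^2(B_i)}\|v\|_{L^2(B_i)}$, discrete Cauchy--Schwarz over $i$, and the overlap count to return to the global norm; the only place you go beyond the paper is in spelling out the final step $\sum_{i=1}^m \|v\|_{L^2(B_i)}^2 \leq M\|v\|_{L^2(\Omega)}^2$ via $\sum_i \mathbf{1}_{B_i} \leq \sum_i \mathbf{1}_{K_i} \leq M$, which the paper asserts without justification. One bookkeeping slip in your first display: Cauchy--Schwarz gives $|\FINT_{B_i} w| \leq |B_i|^{-1/2}\|w\|_{L^2(B_i)}$, not $|B_i|^{-1}\|w\|_{L^2(B_i)}$, so the middle expression as you wrote it equals $|B_i|^{-1}\|w\|_{L^2(B_i)}\|v\|_{L^2(B_i)}$ and does not match the equality you claim; with the correct exponent $-1/2$ the weights cancel as $|B_i|\cdot|B_i|^{-1/2}\cdot|B_i|^{-1/2}=1$ and your conclusion stands unchanged.
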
 
\begin{proof}
  Let $v,w \in L^2(\Omega)$. We first note that
  the Cauchy--Schwarz inequality gives
  \begin{align}
    \label{eq:av_l2_cont}
    \FINT_{B_i} v \leq |B_i|^{-1}\|v\|_{L^2(B_i)} \|1\|_{L^2(B_i)} = |B_i|^{-1/2}\|v\|_{L^2(B_i)}.
  \end{align}
  Using this estimate and the Cauchy--Schwarz inequality in $\R^m$ we get
  \begin{align*}
    b(w,v)
      \leq \sum_{i=1}^m \|w\|_{L^2(B_i)}\|v\|_{L^2(B_i)}
      \leq \left(\sum_{i=1}^m \|w\|_{L^2(B_i)}^2\right)^{1/2}
            \left(\sum_{i=1}^m \|v\|_{L^2(B_i)}^2\right)^{1/2}
      \leq M\|w\|_{L^2(\Omega)}\|v\|_{L^2(\Omega)}.
  \end{align*}
\end{proof}

Despite this upper bound, a pure data fitting problem
\begin{align*}
  \op{min}_{v \in L^2(\Omega)}
    \frac12 \sum_{i=1}^m |B_i| \left| \FINT_{B_i} v- b_i\right|^2
  = \op{min}_{v \in L^2(\Omega)}
    \frac12 \|v\|_b^2 - \widetilde{b(u,v)} + \op{const}
\end{align*}
is in general ill-posed, since $b(\cdot,\cdot)$ is not coercive,
or, equivalently, $\|\cdot\|_b$ cannot be bounded from below
by $\|\cdot\|_{L^2(\Omega)}$.
Due to the finite rank $m$ of $b(\cdot,\cdot)$, the
same is true whenever minimization is considered
in a space with dimension larger then $m$.
Thanks to the PDE-regularization,
the situation is different for the PDE-regularized
problem~\eqref{eq:augmented_auxiliary_problem_min}.

\begin{proposition}
  \label{prop:augmented_auxiliary_existence}
  The PDE-regularized problem~\eqref{eq:augmented_auxiliary_problem_min}
  or, equivalently,~\eqref{eq:augmented_auxiliary_problem}
  has a unique solution $\tilde{u} \in V$
  which depends Lipschitz-continuously on
  the provided data $b_1,\dots,b_m$.
\end{proposition}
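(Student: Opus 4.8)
The plan is to establish that the augmented bilinear form $\tilde{c}(\cdot,\cdot)$ is a symmetric, coercive, and continuous bilinear form on the Hilbert space $V$, so that the Lax--Milgram theorem (or, since the form is symmetric, the Riesz representation theorem) applies to the variational problem~\eqref{eq:augmented_auxiliary_problem}. Since the minimization problem~\eqref{eq:augmented_auxiliary_problem_min} is equivalent to~\eqref{eq:augmented_auxiliary_problem}, existence and uniqueness of $\tilde{u}$ follows. The Lipschitz dependence on the data will then follow from the linearity of the solution map in the right-hand side together with the coercivity constant.

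First I would verify coercivity of $\tilde{c}$ on $V$. Writing $\tilde{c}(v,v) = \tilde{a}(v,v) + \dataweight b(v,v) = \|v\|_{\tilde{a}}^2 + \dataweight\|v\|_b^2$, I observe that $\dataweight\|v\|_b^2 \geq 0$, so it suffices that $\tilde{a}(\cdot,\cdot)$ already be coercive on $V$. Here I can reuse the argument already invoked for the exact problem: the subspace $V$ with $H_0^1(\Omega)\subset V\subset H^1(\Omega)$ is chosen so that, together with the lower bounds $\tildealpha \geq \tildealphamin > 0$ and $\tildesigma \geq 0$, a Poincaré--Friedrichs-type inequality yields $\tilde{a}(v,v) \geq c_0\|v\|_{H^1(\Omega)}^2$ for some $c_0 > 0$. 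Thus $\tilde{c}(v,v) \geq c_0\|v\|_{H^1(\Omega)}^2$ as well. Continuity of $\tilde{c}$ on $V$ follows from the upper bounds $\tildealpha\leq\tildealphamax$ and $\tildesigma\leq\tildesigmamax$ for the $\tilde{a}$-part, and from Lemma~\ref{lem:data_form_L2_cont}, which gives $b(w,v)\leq M\|w\|_{L^2(\Omega)}\|v\|_{L^2(\Omega)}\leq M\|w\|_{H^1(\Omega)}\|v\|_{H^1(\Omega)}$, for the data part. Symmetry of $\tilde{c}$ is immediate since both $\tilde{a}$ and $b$ are symmetric.

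Next I would check that the right-hand side $\tilde{r}$ is a bounded linear functional on $V$. Linearity is clear; for boundedness, $\tilde{\ell}\in V^*$ by assumption, and the data contribution $\dataweight\,\widetilde{b(u,v)} = \dataweight\sum_{i=1}^m b_i|B_i|\FINT_{B_i}v$ is bounded because, using~\eqref{eq:av_l2_cont}, $|\FINT_{B_i}v|\leq |B_i|^{-1/2}\|v\|_{L^2(B_i)}$, so that $|\widetilde{b(u,v)}| \leq (\sum_i |b_i|^2|B_i|)^{1/2}\|v\|_{L^2(\Omega)}$ by Cauchy--Schwarz. With $\tilde{c}$ symmetric, coercive, and continuous and $\tilde{r}\in V^*$, Lax--Milgram delivers a unique $\tilde{u}\in V$ solving~\eqref{eq:augmented_auxiliary_problem}.

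Finally, for Lipschitz dependence on the data, I would exploit that the problem is linear in the right-hand side. Given two data vectors $(b_i)$ and $(b_i')$ with solutions $\tilde{u},\tilde{u}'$, the difference $e\colonequals\tilde{u}-\tilde{u}'$ satisfies $\tilde{c}(e,v)=\dataweight\sum_i(b_i-b_i')|B_i|\FINT_{B_i}v$ for all $v\in V$. Testing with $v=e$, using coercivity on the left and the functional bound above on the right, then dividing by $\|e\|_{H^1(\Omega)}$, yields $\|e\|_{H^1(\Omega)}\leq C\dataweight(\sum_i|b_i-b_i'|^2|B_i|)^{1/2}$, which is the desired Lipschitz estimate. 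I do not expect any genuine obstacle here, as the statement is essentially a direct application of Lax--Milgram; the only point requiring mild care is making the coercivity of $\tilde{a}$ on the possibly non-standard subspace $V$ explicit, which is inherited from the same assumption used to make the exact problem~\eqref{eq:exact_problem} well-posed.
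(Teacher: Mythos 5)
Your overall skeleton (symmetric bilinear form, Lax--Milgram, linearity for the Lipschitz bound) matches the paper, and your treatment of continuity of $\tilde{c}$, boundedness of $\tilde{r}$, and the Lipschitz dependence of the solution on $b_1,\dots,b_m$ is sound. However, the coercivity step contains a genuine gap: you claim that $\tilde{a}(v,v)\geq c_0\|v\|_{H^1(\Omega)}^2$ on $V$ is ``inherited from the same assumption used to make the exact problem well-posed.'' It is not. The paper assumes only that the \emph{exact} form $a(\cdot,\cdot)$ is coercive on $V$; for the auxiliary coefficients one merely has $\tildealpha\geq\tildealphamin>0$ and $\tildesigma\geq 0$, with $\tildesigma\equiv 0$ allowed. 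Coercivity of $a$ on $V$ may hinge entirely on the exact $\sigma$: take $V=H^1(\Omega)$ (the pure Neumann setting, which is exactly the one used in the paper's numerical experiments), $\sigma\equiv 1$ and $\tildesigma\equiv 0$. Then $a$ is coercive on $V$, but $\tilde{a}(v,v)=\int_\Omega\tildealpha|\nabla v|^2\,dx$ vanishes for constant $v$, so no Poincar\'e--Friedrichs-type inequality can yield $\tilde{a}(v,v)\geq c_0\|v\|_{H^1(\Omega)}^2$ on $V$. By discarding the data term via $\dataweight\|v\|_b^2\geq 0$, you throw away precisely the piece that rescues coercivity.

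The paper's proof avoids this by using \emph{both} parts of $\tilde{c}$: the lower bound on $\tildealpha$ gives $\|\nabla v\|_{L^2(\Omega)}^2\leq\tildealphamin^{-1}\|v\|_{\tilde{c}}^2$, while $b(\cdot,\cdot)$ is coercive on the space of constant functions (for constant $v=c\neq 0$ one has $b(c,c)=c^2\sum_i|B_i|>0$). A Poincar\'e-type argument (the abstract result cited from \cite{Graeser2015}; a quantitative version is Lemma~\ref{lem:poincare_with_data}) then shows that control of the gradient together with control of the averages $\FINT_{B_i}v$ controls the full $H^1(\Omega)$-norm, so $\tilde{c}$ is coercive on $H^1(\Omega)$ and hence on $V$. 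This is the substantive content of the proposition---neither the possibly degenerate auxiliary PDE term nor the finite-rank data term is coercive by itself, but their sum is---and your proof, as written, misses it. The remainder of your argument (continuity, Lax--Milgram, and the Lipschitz estimate obtained by testing the difference equation with $e=\tilde{u}-\tilde{u}'$) goes through unchanged once coercivity of $\tilde{c}$ is established this way.
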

\begin{proof}
  By the lower bound on $\tildealpha$ we obtain
  \begin{align*}
    \|\nabla v\|_{L^2(\Omega)}^2
      \leq \tildealphamin^{-1} \|v\|^2_{\tilde{c}}
      \qquad \forall v \in V.
  \end{align*}
  Furthermore $b(\cdot,\cdot)$ is coercive on the
  space of constant functions.
  Hence coercivity of $\tilde{c}$ on $H^1(\Omega)$
  follows from the Poincar\'{e} inequality
  (see \cite[Proposition~2]{Graeser2015}).
  Furthermore the upper bounds on $\tildealpha$ and $\tildesigma$
  and Lemma~\ref{lem:data_form_L2_cont}
  imply continuity and thus ellipticity of $\tilde{c}(\cdot,\cdot)$
  on $V$.
  Finally, we get continuity of $\widetilde{b(u,\cdot)}$
  and thus $\tilde{r}(\cdot)$ similar to the proof of
  Lemma~\ref{lem:data_form_L2_cont} such that
  the Lax--Milgram theorem guarantees existence of a unique
  solution $\tilde{u} \in V$ of~\eqref{eq:augmented_auxiliary_problem}.
  Lipschitz continuous dependency on $b_1,\dots,b_m$
  follows from standard arguments
  for linear elliptic problems.
\end{proof}

\section{Error bounds for PDE-regularized learning}
\label{sec:error_analysis}

\subsection{Localized Poincar\'{e} inequality and improved $L^2(\Omega)$-coercivity}

A central ingredient in the error estimates shown later
is the $L^2(\Omega)$-coercivity of $\tilde{c}(\cdot,\cdot)$.
In the proof of Proposition~\ref{prop:augmented_auxiliary_existence}
we have seen, that $\tilde{c}(\cdot,\cdot)$
inherits coercivity with respect to the $H^1(\Omega)$-
and $L^2(\Omega)$-norm from $\tilde{a}(\cdot,\cdot)$.
However, the coercivity constants are independent of the data term.
In this section we will show an improved 
$L^2(\Omega)$-coercivity by applying localized Poincar\'e estimates.
First we remind the classical Poincar\'e inequality on convex
domains.

\begin{lemma}
  \label{lemma:poincare_average}
  Let $U \subset \R^d$ be a convex, open, non-empty, bounded domain
  and $v \in H^1(U)$. Then
  \begin{align*}
    \left\|v-\FINT_U v\right\|_{L^2(U)} \leq C_U \|\nabla v\|_{L^2(U)}
  \end{align*}
  with Poincar\'e constant $C_U = \frac{\op{diam}(U)}{\pi}$.
\end{lemma}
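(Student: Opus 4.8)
The plan is to prove the sharp Payne--Weinberger inequality by reducing the $d$-dimensional estimate to a one-dimensional one. Without loss of generality I assume $\FINT_U v = 0$, so that the claim becomes $\|v\|_{L^2(U)}^2 \leq (\op{diam}(U)/\pi)^2 \|\nabla v\|_{L^2(U)}^2$; equivalently, the smallest nonzero Neumann eigenvalue of $-\Delta$ on $U$ is bounded below by $(\pi/\op{diam}(U))^2$. The one-dimensional model is the classical Wirtinger inequality: for $w \in H^1(0,L)$ with $\int_0^L w\,dt = 0$ one has $\int_0^L w^2\,dt \leq (L/\pi)^2 \int_0^L (w')^2\,dt$, the extremal function being $t\mapsto \cos(\pi t/L)$ and $\pi^2/L^2$ the smallest nonzero Neumann eigenvalue of $-\partial_t^2$ on $(0,L)$. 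First I would establish this one-dimensional case, either by expanding $w$ in the cosine basis or by a direct variational argument.

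Next I would carry out the bisection argument of Payne and Weinberger. Given the mean-zero function $v$, I cut $U$ by a hyperplane into two convex pieces $U_+$ and $U_-$ with $\int_{U_+} v = \int_{U_-} v = 0$: for a fixed cut direction this is possible by the intermediate value theorem, sweeping the hyperplane across $U$ and using that the total integral vanishes. Iterating this, and always cutting transverse to the longest remaining direction, produces a finite partition of $U$ into convex ``needles'' that are arbitrarily thin in all but one direction, each still satisfying $\int v = 0$ and each of diameter at most $\op{diam}(U)$ (a hyperplane cut of a convex set yields subsets of the original, so diameters cannot increase). Since both sides of the asserted inequality are additive over such a partition, it suffices to prove the estimate, with constant $(\op{diam}(U)/\pi)^2$, on a single thin needle.

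On a needle aligned with a segment of length $L \leq \op{diam}(U)$ I would integrate over the cross-sections orthogonal to the segment axis. Writing $\rho(t)$ for the $(d-1)$-dimensional measure of the slice at position $t$, convexity of $U$ together with the Brunn--Minkowski inequality shows that $\rho^{1/(d-1)}$ is concave, so $\rho$ is in particular log-concave. As the needle degenerates, the problem reduces to the weighted one-dimensional inequality $\int_0^L w^2 \rho\,dt \leq (L/\pi)^2 \int_0^L (w')^2 \rho\,dt$ for all $w$ with $\int_0^L w\rho\,dt = 0$. The hard part, and the technical core of the whole argument, is proving this weighted estimate with the \emph{uniform} constant $\pi$ for every admissible weight $\rho$: one argues by an eigenvalue comparison, showing that the first nonzero Neumann eigenvalue of the weighted operator $-\rho^{-1}\partial_t(\rho\,\partial_t\,\cdot)$ is minimized by the constant weight, which returns the unweighted constant $\pi^2/L^2$. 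With this one-dimensional inequality in hand, summing back over the needles yields the claim. Alternatively, since the statement is classical, one may simply invoke the proof of Payne--Weinberger (with the gap later closed by Bebendorf).
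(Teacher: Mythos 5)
The paper itself contains no proof of this lemma: it only remarks that the constant is optimal and refers to Payne--Weinberger \cite{PayneWeinberger1960}. So you are reconstructing the cited literature proof, which is legitimate, and your outline does follow that classical strategy (bisection into convex needles, reduction to a weighted one-dimensional Wirtinger inequality). However, the step on which the entire decomposition rests is justified incorrectly, and as you state it, it fails.

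You claim that for a \emph{fixed} cut direction a hyperplane splitting $U$ into two zero-mean pieces exists ``by the intermediate value theorem, sweeping the hyperplane across $U$ and using that the total integral vanishes.'' This is not valid: the partial integral $I(t)=\int_{\{x\cdot e<t\}}v\,dx$ vanishes at both ends of the sweep, but a continuous function vanishing at both endpoints need not vanish in between. Concretely, take $U=(0,1)^2$ and $v=+1$ on $\{x_1<1/2\}$, $v=-1$ on $\{x_1>1/2\}$; sweeping orthogonally to $e_1$ gives $I(t)=\min\{t,1-t\}>0$ on $(0,1)$, so no hyperplane orthogonal to $e_1$ yields two zero-mean halves (while a cut orthogonal to $e_2$ does). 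The existence of a zero-mean cut requires a \emph{rotation} (ham-sandwich type) argument: for each normal direction $\theta$ in a fixed $2$-plane select, say, the volume-bisecting hyperplane with normal $\theta$, observe that the integral over the side the normal points to is odd under $\theta \mapsto \theta+\pi$, and apply the intermediate value theorem in $\theta$. This has two consequences for your outline. First, the cut direction is dictated by $v$, so your rule ``always cutting transverse to the longest remaining direction'' is not available. Second, thinness of the pieces must be obtained differently: since each such cut bisects volume, after $k$ rounds every piece has volume $2^{-k}|U|$, and a convex set of bounded diameter and small volume is thin in all directions but one. With the decomposition repaired in this way, the remaining ingredients you list --- additivity of both sides of the inequality over the pieces, non-increase of diameter under convex cuts, the passage to the needle limit, and the weighted one-dimensional inequality with uniform constant $\pi$ for weights $\rho$ with $\rho^{1/(d-1)}$ concave (from Brunn--Minkowski) --- do assemble into the Payne--Weinberger/Bebendorf proof.
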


In fact the constant $C_U$ given here is the best possible
one depending only on the domain diameter. A proof can be found
in \cite{PayneWeinberger1960}.
As a direct consequence we get the following estimate.

\begin{lemma}
  \label{lemma:poincare_int}
  Let $U \subset \R^d$ be a convex, open, non-empty, bounded domain
  and $v \in H^1(U)$. Then
  \begin{align*}
    \|v\|^2_{L^2(U)}
    \leq C_U^2 \|\nabla v\|^2_{L^2(U)} + \|\FINT_U v\|_{L^2(U)}^2
    = C_U^2 \|\nabla v\|^2_{L^2(U)} + |U|\left(\FINT_U v\right)^2
  \end{align*}
  with Poincar\'e constant $C_U = \frac{\op{diam}(U)}{\pi}$.
\end{lemma}

\begin{proof}
  Using the $L^2(U)$-orthogonality $v-\FINT_U v$ and $\FINT_U v$
  and Lemma~\ref{lemma:poincare_average} we get
  \begin{align*}
    \|v\|_{L^2(U)}^2
      = \left\|v-\FINT_U v\right\|_{L^2(U)}^2
        + \left\|\FINT_U v\right\|_{L^2(U)}^2
    \leq C_U^2 \|\nabla v\|^2_{L^2(U)} + |U|\left(\FINT_U v\right)^2.
  \end{align*}
\end{proof}

It is also possible to get bounds involving just averages
over subsets, at the price of an additional constant.
An abstract prove has been given in \cite{Graeser2015}.
Since we are interested in the resulting constants
we will give an explicit proof here.

\begin{lemma}
  \label{lemma:poincare_int_subset}
  Let $U \subset \R^d$ be a convex, open, non-empty, bounded domain,
  $W \subset U$ with $|W|>0$, $t>0$,
  and $v \in H^1(U)$. Then
  \begin{align*}
    \|v\|^2_{L^2(U)}
      \leq C_U^2 \left(1+(1+t) \frac{|U|}{|W|}\right) \|\nabla v\|^2_{L^2(U)}
      + (1+t^{-1})|U|\left(\FINT_W v\right)^2
  \end{align*}
  with Poincar\'e constant $C_U = \frac{\op{diam}(U)}{\pi}$
  and especially (using $t=1$)
  \begin{align*}
    \|v\|^2_{L^2(U)}
      \leq 3 \frac{|U|}{|W|} \Bigl(C_U^2  \|\nabla v\|^2_{L^2(U)}
      + |W|\left(\FINT_W v\right)^2 \Bigr).
  \end{align*}
\end{lemma}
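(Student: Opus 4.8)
The plan is to start from Lemma~\ref{lemma:poincare_int}, which already gives precisely the desired estimate but with the full average $\FINT_U v$ in place of the subset average $\FINT_W v$. The entire task is therefore to replace $\FINT_U v$ by $\FINT_W v$, paying for this with an extra gradient contribution and the stated constants.

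First I would control the difference of the two averages. Since $\FINT_U v$ is constant, we have $\FINT_W v - \FINT_U v = \FINT_W(v - \FINT_U v)$, and a Cauchy--Schwarz estimate over $W$ together with the enlargement $\|\cdot\|_{L^2(W)} \leq \|\cdot\|_{L^2(U)}$ (using $W \subset U$) yields
\[
  \left|\FINT_U v - \FINT_W v\right|
    \leq |W|^{-1/2}\left\|v - \FINT_U v\right\|_{L^2(U)}
    \leq |W|^{-1/2} C_U \|\nabla v\|_{L^2(U)},
\]
where the last step is the Poincar\'e inequality of Lemma~\ref{lemma:poincare_average} on the convex set $U$. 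This is the crux of the argument: because $W$ need not itself be convex, one cannot apply Poincar\'e directly on $W$, so the estimate must be routed through the convex superset $U$, and the subset enlargement of the $L^2$-norm is exactly what makes this possible.

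Next I would use Young's inequality in the form $(a+b)^2 \leq (1+t)a^2 + (1+t^{-1})b^2$ with $a = \FINT_U v - \FINT_W v$ and $b = \FINT_W v$ to split
\[
  \left(\FINT_U v\right)^2
    \leq (1+t)\frac{C_U^2}{|W|}\|\nabla v\|^2_{L^2(U)}
      + (1+t^{-1})\left(\FINT_W v\right)^2.
\]
Substituting this into the bound of Lemma~\ref{lemma:poincare_int} and collecting the two resulting gradient contributions gives the first asserted inequality immediately.

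The special case then follows by taking $t=1$ and exploiting $|W| \leq |U|$, equivalently $|U|/|W| \geq 1$: this bounds the gradient coefficient via $1 + 2|U|/|W| \leq 3|U|/|W|$ and the average coefficient via $2 \leq 3$, producing the factorized form. Beyond the averaging-difference estimate no real difficulty arises; the remainder is bookkeeping of constants and choosing the Young parameter.
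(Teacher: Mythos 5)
Your proof is correct and takes essentially the same route as the paper: both start from Lemma~\ref{lemma:poincare_int} on $U$, split $\left(\FINT_U v\right)^2$ by the same Young-type inequality with identical parameter placement (the $(1+t)$ factor on the average difference, the $(1+t^{-1})$ factor on $\FINT_W v$), and control $\FINT_W\bigl(v-\FINT_U v\bigr)$ via Cauchy--Schwarz on $W$, enlargement to $U$, and the Poincar\'e inequality of Lemma~\ref{lemma:poincare_average} on the convex superset. The only difference is presentational (you estimate the average difference first and then substitute, while the paper substitutes first), and your constant bookkeeping for the $t=1$ case matches the paper's.
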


\begin{proof}
  Applying Lemma~\ref{lemma:poincare_int} on $U$ we get
  \begin{align*}
    \|v\|_{L^2(U)}^2
        \leq C_U^2 \|\nabla v\|^2_{L^2(U)} + \|v_U\|_{L^2(U)}^2
  \end{align*}
  where we used the notation $v_U = \FINT_U v$.
  Utilizing the inequality
  \begin{align*}
    \Prod{a,a}
      = \Prod{a-b,a-b} +2\Prod{b,a-b} + \Prod{b,b}
      \leq (1+t^{-1})\Prod{b,b} + (1+t)\Prod{b-a,b-a}
  \end{align*}
  for the symmetric positive semi-definite bilinear form
  $\Prod{\cdot,\cdot}=\FINT_W(\cdot)\FINT_W(\cdot)$
  we get
  \begin{align*}
    \|v_U\|_{L^2(U)}^2
      = |U|\bigl(\FINT_W v_U \bigr)^2
      &\leq
        |U|\Bigl(
          (1+t^{-1}) \bigl(\FINT_W v \bigr)^2
          + (1+t) \bigl(\FINT_W (v-v_U) \bigr)^2
        \Bigr)\\
      &\leq
        |U|\Bigl(
          (1+t^{-1}) \bigl(\FINT_W v \bigr)^2
          + (1+t) |W|^{-1}\| v-v_U \|_{L^2(W)}^2
        \Bigr)\\
      &\leq
          |U|(1+t^{-1}) \bigl(\FINT_W v \bigr)^2
          + (1+t) \frac{|U|}{|W|}\| v-v_U \|_{L^2(U)}^2.
  \end{align*}
  Finally we get the assertion by using Lemma~\ref{lemma:poincare_average}.
\end{proof}

Note that using the optimal value for $t$ the constant $3$
can be slightly improved to $1+\varphi<3$ with the golden ratio
$\varphi=\frac12(1+\sqrt{5})$.

Next we apply Lemma~\ref{lemma:poincare_int} and
Lemma~\ref{lemma:poincare_int_subset} locally to show
a data dependent global Poincar\'e type estimate.
To unify both estimates we encode the maximal mismatch
of $B_i$ and $K_i$ in terms of the constant
\begin{align*}
  \eta \colonequals
    \begin{cases}
      1
        &\text{if } B_i=K_i \quad \forall i,\\
      3 \max_{i=1,\dots,m} \frac{|K_i|}{|B_i|}.
        &\text{else}.
    \end{cases}  
\end{align*}

\begin{lemma}
  \label{lem:poincare_with_data}
  Using the constants $M,R,\eta$ defined above it holds
  for $v\in H^1(\Omega)$
  \begin{align*}
    \|v\|_{L^2(\Omega)}^2 \leq \eta \left(
      \frac{R^2M}{\pi^2} \|\nabla v\|_{L^2(\Omega)}^2 + b(v,v)
      \right).
  \end{align*}
\end{lemma}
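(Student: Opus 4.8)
The plan is to localize the estimate over the covering family $(K_i)$ and then apply the convex-domain Poincaré inequalities from Lemma~\ref{lemma:poincare_int} and Lemma~\ref{lemma:poincare_int_subset} on each $K_i$. The starting observation is that the covering condition $\Omega \subset \bigcup_i \overline{K_i}$ forces every point of $\Omega$ to lie in at least one $K_i$ up to a null set, so the pointwise multiplicity $x \mapsto |\{i \st x \in K_i\}|$ is bounded below by $1$ and above by $M$. Integrating $|v|^2$ against this multiplicity and using the lower bound gives
\begin{align*}
  \|v\|_{L^2(\Omega)}^2 \leq \sum_{i=1}^m \|v\|_{L^2(K_i)}^2,
\end{align*}
while the same identity applied to $|\nabla v|^2$ together with the upper bound yields
\begin{align*}
  \sum_{i=1}^m \|\nabla v\|_{L^2(K_i)}^2 \leq M \|\nabla v\|_{L^2(\Omega)}^2.
\end{align*}

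Next I would estimate each local term $\|v\|_{L^2(K_i)}^2$ through the average over $B_i$. In the degenerate case $B_i = K_i$ this is exactly Lemma~\ref{lemma:poincare_int} applied on $U = K_i$, producing $C_{K_i}^2 \|\nabla v\|_{L^2(K_i)}^2 + |B_i| (\FINT_{B_i} v)^2$; in the general case $B_i \subset K_i$ I would instead invoke Lemma~\ref{lemma:poincare_int_subset} with $U = K_i$, $W = B_i$, and $t = 1$, giving the same two terms but scaled by the factor $3\,|K_i|/|B_i|$. The constant $\eta$ is designed precisely to cover both situations at once, since $\eta \geq 1$ always and $3\,|K_i|/|B_i| \leq \eta$ in the general case, so in either case
\begin{align*}
  \|v\|_{L^2(K_i)}^2 \leq \eta \Bigl( C_{K_i}^2 \|\nabla v\|_{L^2(K_i)}^2 + |B_i| \bigl(\FINT_{B_i} v\bigr)^2 \Bigr).
\end{align*}

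To finish, I would bound the Poincaré constant uniformly by $C_{K_i} = \op{diam}(K_i)/\pi \leq R/\pi$, sum the local estimates over $i$, insert the two multiplicity bounds from the first step, and recognize $\sum_i |B_i| (\FINT_{B_i} v)^2 = b(v,v)$ from the definition~\eqref{eq:data_term}. This chains into
\begin{align*}
  \|v\|_{L^2(\Omega)}^2
    \leq \sum_{i=1}^m \|v\|_{L^2(K_i)}^2
    \leq \eta \Bigl( \frac{R^2}{\pi^2} \sum_{i=1}^m \|\nabla v\|_{L^2(K_i)}^2 + b(v,v) \Bigr)
    \leq \eta \Bigl( \frac{R^2 M}{\pi^2} \|\nabla v\|_{L^2(\Omega)}^2 + b(v,v) \Bigr),
\end{align*}
which is the claim.

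The computation is routine once the pieces are assembled; the only genuine subtlety is making the multiplicity argument work in both directions simultaneously — using the lower bound $1$ to dominate the global $L^2(\Omega)$-norm by the sum of local norms, and the upper bound $M$ to control the summed local gradient norms — while keeping the overlap factor $M$ and the shape-mismatch factor $\eta$ separated so that each lands exactly where the statement requires. One should also note that the covering need only hold up to a set of measure zero, which is automatic here because the closed sets $\overline{K_i}$ cover $\Omega$ and the boundaries of the Lipschitz domains $K_i$ are null.
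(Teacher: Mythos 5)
Your proof is correct and takes essentially the same route as the paper: localize over the covering family, apply Lemma~\ref{lemma:poincare_int} (when $B_i=K_i$) or Lemma~\ref{lemma:poincare_int_subset} with $U=K_i$, $W=B_i$, $t=1$ (otherwise), bound $\op{diam}(K_i)\leq R$, sum, and absorb the overlap into $M$. If anything, your write-up is slightly more careful than the paper's, whose displayed chain writes $B_i$ in the covering sum and in the gradient terms where the sets $K_i$ are what the argument actually requires.
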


\begin{proof}
  Let $v \in H^1(\Omega)$. Then we also have $v \in H^1(B_i)$ for
  $i=1,\dots,m$.
  Applying either Lemma~\ref{lemma:poincare_int} (if $\eta=1$)
  or Lemma~\ref{lemma:poincare_int_subset} (if $\eta>1$)
  on each $B_i$ we get
  \begin{align*}
    \|v\|_{L^2(\Omega)}^2
      \leq \sum_{i=1}^m \|v\|_{L^2(B_i)}^2
      &\leq \eta\sum_{i=1}^m
        \left(
          \frac{\op{diam}(K_i)^2}{\pi^2} \|\nabla v\|^2_{L^2(K_i)}
          + |B_i|\left(\FINT_{B_i} v\right)^2
        \right)\\
      &\leq \eta\left(
        \sum_{i=1}^m
        \frac{R^2}{\pi^2} \|\nabla v\|^2_{L^2(B_i)}
        + b(v,v)
        \right).
  \end{align*}
  Using the fact that each part of $\Omega$ is a most covered
  $M$-times by the sets $B_i$ we get the assertion.
\end{proof}

The right hand side in the estimate of Lemma~\ref{lem:poincare_with_data}
almost coincides with $\tilde{c}(v,v)$.
The following lemma balances the constants to finally
derive an $L^2(\Omega)$-coercivity of $\tilde{c}(\cdot,\cdot)$.

\begin{lemma}
  \label{lem:l2coercivity}
  For any $v \in H^1(\Omega)$ it holds that
  \begin{align*}
    \|v\|^2_{L^2(\Omega)}
      \leq \Gamma \|v\|_{\tilde{c}}^2,
      \qquad
      \Gamma \colonequals \eta\max\left\{\frac{R^2M}{\pi^2 \tildealphamin}, \invdataweight \right\}.
  \end{align*}
\end{lemma}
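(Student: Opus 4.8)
The plan is to combine the localized Poincar\'e estimate of Lemma~\ref{lem:poincare_with_data} with the coercivity of $\tilde{a}(\cdot,\cdot)$ and the definition of the $\tilde{c}$-norm, balancing the gradient and data contributions exactly by the maximum that appears in $\Gamma$.

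First I would start from Lemma~\ref{lem:poincare_with_data}, which for $v\in H^1(\Omega)$ yields
\[
  \|v\|_{L^2(\Omega)}^2 \leq \eta\Bigl(\tfrac{R^2M}{\pi^2}\|\nabla v\|_{L^2(\Omega)}^2 + b(v,v)\Bigr),
\]
where $b(v,v)=\|v\|_b^2$. Next I would estimate the gradient term against the $\tilde{a}$-norm: since $\tildesigma\geq 0$ and $\tildealpha\geq\tildealphamin>0$, one has $\|v\|_{\tilde{a}}^2=\tilde{a}(v,v)\geq\tildealphamin\|\nabla v\|_{L^2(\Omega)}^2$, hence $\|\nabla v\|_{L^2(\Omega)}^2\leq\tildealphamin^{-1}\|v\|_{\tilde{a}}^2$. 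The only genuine bookkeeping step is to rewrite the data term so that its weighting matches $\|\cdot\|_{\tilde{c}}^2=\|\cdot\|_{\tilde{a}}^2+\dataweight\|\cdot\|_b^2$; I would simply write $\|v\|_b^2=\invdataweight\,\bigl(\dataweight\|v\|_b^2\bigr)$, which is an identity because $\invdataweight\,\dataweight=1$.

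Putting these together, the bracket is bounded by
\[
  \tfrac{R^2M}{\pi^2}\|\nabla v\|_{L^2(\Omega)}^2 + \|v\|_b^2
  \leq \tfrac{R^2M}{\pi^2\tildealphamin}\|v\|_{\tilde{a}}^2 + \invdataweight\,\dataweight\|v\|_b^2.
\]
Estimating both coefficients by their maximum $\max\bigl\{R^2M/(\pi^2\tildealphamin),\,\invdataweight\bigr\}$ lets me factor out $\|v\|_{\tilde{a}}^2+\dataweight\|v\|_b^2=\|v\|_{\tilde{c}}^2$, and multiplying through by $\eta$ gives precisely $\|v\|_{L^2(\Omega)}^2\leq\Gamma\|v\|_{\tilde{c}}^2$.

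I do not expect a real obstacle here, as the estimate is a direct consequence of the preceding lemmas. The one point deserving care is the rescaling $\|v\|_b^2=\invdataweight\,(\dataweight\|v\|_b^2)$: it is exactly this step that makes the regularization parameter $\delta$ enter the coercivity constant $\Gamma$, and it explains why the weighting chosen in $\tilde{c}(\cdot,\cdot)$ is the natural one, since only then do both contributions combine into a single multiple of $\|v\|_{\tilde{c}}^2$.
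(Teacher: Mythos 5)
Your proposal is correct and follows essentially the same route as the paper: both start from Lemma~\ref{lem:poincare_with_data}, bound $\|\nabla v\|_{L^2(\Omega)}^2 \leq \tildealphamin^{-1}\|v\|_{\tilde{a}}^2$ using the lower bound on $\tildealpha$ and nonnegativity of $\tildesigma$, and then absorb both terms into $\Gamma\|v\|_{\tilde{c}}^2$. Your rescaling $\|v\|_b^2 = \invdataweight(\dataweight\|v\|_b^2)$ followed by taking the maximum of the two coefficients is just a slightly more explicit phrasing of the paper's use of the inequality $\Gamma\dataweight \geq \eta$.
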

\begin{proof}
  Using Lemma~\ref{lem:poincare_with_data},
  positive definiteness of $\|\sqrt{\tildesigma}\cdot\|_{L^2(\Omega)}^2$,
  and $\Gamma \dataweight \geq \eta$ we get
  \begin{align*}
    \|v\|_{L^2(\Omega)}^2
      \leq
        \eta\frac{R^2M}{\pi^2 \tildealphamin} \|v\|_{\tilde{a}}^2 + \eta \|v\|_b^2
      \leq
        \eta\frac{R^2M}{\pi^2 \tildealphamin} \|v\|_{\tilde{a}}^2 + \Gamma\dataweight \|v\|_b^2
      \leq \Gamma \|v\|_{\tilde{c}}^2.
  \end{align*}
\end{proof}

In the following the relation of $\invdataweight$ and the constant from
Lemma~\ref{lem:poincare_with_data} will play a crucial role.
In the simplest case we would have
\begin{align}
  \label{eq:optimal_data_weight}
  \invdataweight = \frac{R^2M}{\pi^2 \tildealphamin},
\end{align}
such that the constant in Lemma~\ref{lem:l2coercivity} reduces to $\Gamma=\eta \invdataweight$.
Since this constant may not be known exactly we quantify the relation
by assuming that
\begin{align}
  \label{eq:weight_choice}
  \invdataweight \in \left[ \theta\frac{R^2M}{\pi^2 \tildealphamin},\theta^{-1}\frac{R^2M}{\pi^2 \tildealphamin} \right]
\end{align}
for $\theta \in (0,1]$.
Note that by $\invdataweight \in (0,\infty)$ such a $\theta=(0,1)$ does always exist.
Using this assumption we obtain the following bounds on $\Gamma$
\begin{align}
  \label{eq:weight_constant_bounds}
  \Gamma \leq \eta \theta^{-1} \frac{R^2M}{\pi^2 \tildealphamin}, \qquad
  \Gamma \leq \eta \theta^{-1} \invdataweight.
\end{align}

\subsection{Error analysis}

Now we show an estimate for the error $u-\tilde{u}$ in the
$L^2(\Omega)$-norm.

\begin{theorem}
  \label{thm:error_estimate}
  Let $u$ and $\tilde{u}$ be the solutions of \eqref{eq:exact_problem}
  and \eqref{eq:augmented_auxiliary_problem}, respectively.
  Furthermore assume that
  $\tilde{\ell}-\tilde{a}(u, \cdot) \in L^2(\Omega)$.
  Then we have with $\Gamma$ from Lemma~\ref{lem:l2coercivity}
  \begin{align}
    \label{eq:data_pde_error_abstract}
    \| \tilde{u}-u\|_{L^2(\Omega)}
    \leq \sqrt{\Gamma}\| \tilde{u}-u\|_{\tilde{c}}
    \leq \Gamma \Bigl( \Epde + \dataweight \Edata \Bigr)
  \end{align}
  and, using $\theta$ from \eqref{eq:weight_choice},
  \begin{align}
    \label{eq:data_pde_error}
    \| \tilde{u}-u\|_{L^2(\Omega)} \leq
    \sqrt{\Gamma}\| \tilde{u}-u\|_{\tilde{c}} \leq
      \eta \theta^{-1}
        \left( \frac{R^2M}{\pi^2 \tildealphamin}
            \Epde + \Edata \right)
  \end{align}
  with the PDE and data error terms
  \begin{align*}
    \Epde &\colonequals \|\tilde{\ell}-\tilde{a}(u, \cdot)\|_{L^2(\Omega)}, &
    \Edata &\colonequals \|\widetilde{b(u,\cdot)} - b(u,\cdot)\|_{L^2(\Omega)}.
  \end{align*}
\end{theorem}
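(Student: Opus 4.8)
The plan is to reduce the whole statement to the already-established nonstandard $L^2(\Omega)$-coercivity of Lemma~\ref{lem:l2coercivity} combined with a Strang/Céa-type analysis of the residual that $u$ leaves in the perturbed problem. The first inequality in both \eqref{eq:data_pde_error_abstract} and \eqref{eq:data_pde_error} is nothing but Lemma~\ref{lem:l2coercivity} applied to $v=\tilde{u}-u$, so the entire task is to bound the energy error $\|\tilde{u}-u\|_{\tilde{c}}$.

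First I would compute the residual of $u$ with respect to the perturbed form. Since $\tilde{u}$ solves \eqref{eq:augmented_auxiliary_problem}, bilinearity gives $\tilde{c}(\tilde{u}-u,v)=\tilde{r}(v)-\tilde{c}(u,v)$ for every $v\in V$. Inserting the definitions $\tilde{c}(u,v)=\tilde{a}(u,v)+\dataweight b(u,v)$ and $\tilde{r}(v)=\tilde{\ell}(v)+\dataweight\widetilde{b(u,v)}$ makes the right-hand side split cleanly into a PDE defect and a data defect,
\[
  \tilde{c}(\tilde{u}-u,v)
    = \bigl(\tilde{\ell}(v)-\tilde{a}(u,v)\bigr)
      + \dataweight\bigl(\widetilde{b(u,v)}-b(u,v)\bigr).
\]
Note that $u$ enters here only as the target function; the exact PDE \eqref{eq:augmented_exact_problem} is not needed, since the PDE defect is measured directly by the auxiliary residual defining $\Epde$.

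Next I would estimate each defect by the Cauchy--Schwarz inequality in $L^2(\Omega)$. By the assumption $\tilde{\ell}-\tilde{a}(u,\cdot)\in L^2(\Omega)$ the first defect is bounded by $\Epde\|v\|_{L^2(\Omega)}$, and since the data defect is represented by the $L^2(\Omega)$-function $\sum_{i=1}^m\bigl(b_i-\FINT_{B_i}u\bigr)\chi_{B_i}$ it is bounded by $\Edata\|v\|_{L^2(\Omega)}$. Using Lemma~\ref{lem:l2coercivity} to trade $\|v\|_{L^2(\Omega)}$ for $\sqrt{\Gamma}\,\|v\|_{\tilde{c}}$, choosing $v=\tilde{u}-u$, and cancelling one factor of $\|\tilde{u}-u\|_{\tilde{c}}$ yields $\|\tilde{u}-u\|_{\tilde{c}}\leq\sqrt{\Gamma}\,(\Epde+\dataweight\Edata)$; multiplying once more by $\sqrt{\Gamma}$ gives \eqref{eq:data_pde_error_abstract}.

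For the refined bound \eqref{eq:data_pde_error} I would not apply a single estimate for $\Gamma$ to the whole right-hand side, but treat the two terms separately: the first inequality in \eqref{eq:weight_constant_bounds} on the PDE term $\Gamma\Epde$ and the second one on the data term $\Gamma\dataweight\Edata$, where the factor $\dataweight$ exactly cancels the $\invdataweight$ in the bound $\Gamma\leq\eta\theta^{-1}\invdataweight$. I do not expect a genuine obstacle, as this is a perturbation estimate whose only real content is the $L^2(\Omega)$-coercivity already supplied by Lemma~\ref{lem:l2coercivity}; the one point requiring a little care is confirming that the data defect functional is $L^2(\Omega)$-representable, so that $\Edata$ is well defined and Cauchy--Schwarz applies, which follows at once from the finite-sum structure of $b(\cdot,\cdot)$ and $\widetilde{b(u,\cdot)}$.
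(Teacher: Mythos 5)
Your proposal is correct and follows essentially the same route as the paper's proof: test the perturbed problem to get the residual identity $\tilde{c}(\tilde{u}-u,v)=(\tilde{\ell}-\tilde{a}(u,\cdot))(v)+\dataweight(\widetilde{b(u,v)}-b(u,v))$, apply Cauchy--Schwarz in $L^2(\Omega)$ to both defects, use Lemma~\ref{lem:l2coercivity} twice (once to cancel a factor of $\|\tilde{u}-u\|_{\tilde{c}}$, once to pass to the $L^2(\Omega)$-norm), and then apply the two bounds in \eqref{eq:weight_constant_bounds} separately to the PDE and data terms so that $\dataweight$ cancels against $\invdataweight$. Your additional observations---that the exact problem \eqref{eq:exact_problem} is never invoked and that the data defect has the explicit $L^2(\Omega)$-representative $\sum_{i=1}^m\bigl(b_i-\FINT_{B_i}u\bigr)\chi_{B_i}$---are accurate refinements of what the paper leaves implicit.
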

\begin{proof}
  Testing~\eqref{eq:augmented_auxiliary_problem} with $\tilde{u}-u$ and
    subtracting $\tilde{c}(u,\tilde{u}-u)$ yields
  \begin{align*}
    \|\tilde{u}-u\|_{\tilde{c}}^2
      &= \tilde{r}(\tilde{u}-u) - \tilde{c}(u,\tilde{u}-u) \\
      &= (\tilde{\ell}-\tilde{a}(u, \cdot))(\tilde{u}-u)
        + \dataweight (\widetilde{b(u,\cdot)} - b(u,\cdot))(\tilde{u}-u)\\
      &\leq \Bigl(
        \|\tilde{\ell}-\tilde{a}(u, \cdot)\|_{L^2(\Omega)}
        + \dataweight\|\widetilde{b(u,\cdot)} - b(u,\cdot)\|_{L^2(\Omega)}
        \Bigr) \|\tilde{u}-u\|_{L^2(\Omega)}\\
      &\leq \Bigl(
        \|\tilde{\ell}-\tilde{a}(u, \cdot)\|_{L^2(\Omega)}
        + \dataweight\|\widetilde{b(u,\cdot)} - b(u,\cdot)\|_{L^2(\Omega)}
        \Bigr) \sqrt{\Gamma}\|\tilde{u}-u\|_{\tilde{c}}
  \end{align*}
  where we used the $L^2(\Omega)$-coercivity from
  Lemma~\ref{lem:l2coercivity} for the last estimate.
  Dividing by $\|\tilde{u}-u\|_{\tilde{c}}$ and
  using Lemma~\ref{lem:l2coercivity} again to bound the left hand side,
  we obtain \eqref{eq:data_pde_error_abstract}.
  Estimate \eqref{eq:data_pde_error} is obtained
  by using the bounds on $\Gamma$ from \eqref{eq:weight_constant_bounds}.
\end{proof}

It should be noted that the residual $\tilde{\ell} - \tilde{a}(u,\cdot)$ is zero
if the PDE is exact. Hence $\Epde$ quantifies the error induced by the inexact PDE.
If the data points provide a more fine grained covering of $\Omega$, then $R$
is decreased. In this sense, the PDE error can be reduced by adding more data.
On the other hand simply adding more data cannot cure the data error $\Edata$
made by using inexact data.

The estimate also indicates that the PDE and data term should be balanced appropriately:
In order minimize the constant $\Gamma$ in front of the PDE error term, $\invdataweight$
should be sufficiently small,
while it should be sufficiently large in order minimize
the constant $\Gamma\dataweight$ in front of the data error term.
Since the estimate \eqref{eq:data_pde_error} bounds both constants
in terms of $\theta^{-1}$ the optimal choice of $\invdataweight$
is \eqref{eq:optimal_data_weight} which leads to $\theta=1$.
This motivates the following definition
which allows to characterize the involved constants.
It has to be understood in the sense that the data term
and selected parameter are part of a sequence of problems.

\begin{definition}
  \label{def:nondegenerate_wellbalanced}
  Problem \eqref{eq:augmented_auxiliary_problem}
  is called \emph{non-degenerate} if
  the decomposition used in the data term is non-degenerate
  in the sense that $\eta,M \in O(1)$.
  It is called \emph{well-balanced}
  if $\invdataweight$ is selected such that there is a $\theta \in (0,1)$
  according to \eqref{eq:weight_choice} with $\theta^{-1} \in O(1)$.
\end{definition}

The following corollary summarizes the result for a 
\emph{non-degenerate} and \emph{well-balanced} problem.

\begin{corollary}
  Let $u$ and $\tilde{u}$ be the solutions of \eqref{eq:exact_problem}
  and \eqref{eq:augmented_auxiliary_problem}, respectively.
  Furthermore assume that $\tilde{\ell}-\tilde{a}(u, \cdot) \in L^2(\Omega)$
  and that the problem is \emph{non-degenerate} and \emph{well-balanced}. Then
  \begin{align*}
    \|\tilde{u} - u\|_{L^2(\Omega)}
      \leq CR^2 \Epde + C \Edata.
  \end{align*}
\end{corollary}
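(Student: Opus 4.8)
The plan is to read the result straight off the refined error bound \eqref{eq:data_pde_error} proved in Theorem~\ref{thm:error_estimate}, and then to absorb every problem-dependent quantity that is assumed to be $O(1)$ into generic constants $C$. Concretely, I would begin from
\[
  \|\tilde{u}-u\|_{L^2(\Omega)}
    \leq \eta\theta^{-1}\left(\frac{R^2 M}{\pi^2 \tildealphamin}\Epde + \Edata\right),
\]
which is exactly the form in which Theorem~\ref{thm:error_estimate} already separates the PDE contribution (carrying the factor $R^2$) from the data contribution. The corollary is thus a specialization of the theorem, and no new analytic estimate is needed.

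The next step is purely bookkeeping, using the two structural hypotheses of Definition~\ref{def:nondegenerate_wellbalanced}. Non-degeneracy supplies $\eta,M \in O(1)$, well-balancedness supplies $\theta^{-1}\in O(1)$, while $\tildealphamin>0$ and $\pi$ are fixed constants determined by the chosen auxiliary PDE and hence do not vary along the sequence of problems. Consequently the prefactor $\eta\theta^{-1}M/(\pi^2\tildealphamin)$ in front of $R^2\Epde$ is bounded by a single constant $C$, and the prefactor $\eta\theta^{-1}$ in front of $\Edata$ is bounded by a (possibly different, but likewise named) constant $C$. Substituting these two bounds into the displayed estimate immediately yields
\[
  \|\tilde{u}-u\|_{L^2(\Omega)} \leq C R^2 \Epde + C \Edata,
\]
which is the claimed inequality.

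I expect no genuine difficulty here, and the only point requiring care is the precise meaning of the $O(1)$ notation. As emphasized in the remark following Definition~\ref{def:nondegenerate_wellbalanced}, non-degeneracy and well-balancedness are to be read along a sequence of problems (for instance a family of progressively finer coverings $(B_i),(K_i)$ of $\Omega$), so the constant $C$ must be \emph{uniform} over that sequence. The main thing to verify is therefore that none of the absorbed quantities $\eta$, $M$, or $\theta^{-1}$ hides a dependence on $R$ that would corrupt the clean $R^2$ scaling of the PDE term; this is exactly what the two hypotheses are designed to exclude, so the substitution is legitimate and the proof is complete.
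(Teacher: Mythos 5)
Your proposal is correct and is exactly the argument the paper intends: the corollary is read off from estimate \eqref{eq:data_pde_error} of Theorem~\ref{thm:error_estimate}, with $\eta$, $M$, $\theta^{-1}$ absorbed into generic constants via the \emph{non-degenerate} and \emph{well-balanced} assumptions, and $\tildealphamin$, $\pi$ treated as fixed. Your remark about uniformity of $C$ along the sequence of problems is the right reading of the $O(1)$ hypotheses, so nothing is missing.
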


Next we will discuss the PDE error term.

\begin{remark}
  Under assumptions on the smoothness of $\partial \Omega$, $u$,
  and $\tildealpha-\alpha$ it can be shown, that $\tilde{\ell}-\tilde{a}(u, \cdot) \in L^2(\Omega)$
  and furthermore that this term can be bounded with respect to $\tildealpha-\alpha$.
  To show this we first note that for all $v \in V$ we have
  \begin{align*}
    |\tilde{\ell}(v)-\tilde{a}(u, v)|
      &= |\tilde{\ell}(v) - (\ell(v)-a(u,v)) -\tilde{a}(u, v)|\\
      &\leq
      \Bigl(\|\tilde{\ell} - \ell\|_{L^2(\Omega)}
      + \|\tildesigma-\sigma\|_{L^\infty(\Omega)} \|u\|_{L^2(\Omega)}
      \Bigr)\|v\|_{L^2(\Omega)}
        + \left| \int_{\Omega} (\tildealpha-\alpha) \nabla u \cdot \nabla v \, dx \right|.
  \end{align*}
  To estimate the last term we note that for $H_0^1(\Omega) \subset V \subset H^1(\Omega)$,
  problem~\eqref{eq:exact_problem} corresponds to a mixed boundary value
  problem with
  \begin{align*}
    u=0 \text{ on }(\partial \Omega)_D, \qquad
    \frac{\partial u}{\partial\nu}=0 \text{ on }(\partial \Omega)_N
  \end{align*} 
  for a decomposition $\partial \Omega = (\partial \Omega)_D \cup (\partial \Omega)_N$.
  For sufficiently smooth $\partial \Omega$ we then have
  \begin{align}
    \label{eq:boundary_terms}
    \Bigl(\frac{\partial u}{\partial \nu} v\Bigr)|_{\partial \Omega} = 0
    \qquad \forall v \in V.
  \end{align}
  Now,
  assuming that $\tildealpha-\alpha \in W^{1,\infty}(\Omega)$ and
  $u \in H^2(\Omega)$
  we get
  \begin{align*}
    \left|\int_\Omega (\tildealpha - \alpha) \nabla u \cdot \nabla v\, dx \right|
      &= \left|
        -\int_\Omega \op{div}((\tildealpha - \alpha) \nabla u) v\, dx
        + \int_{\partial\Omega} (\tildealpha - \alpha)\Bigl(\frac{\partial u}{\partial \nu} v\Bigr)\, ds  \right| \\
      &= \left|\int_\Omega \Bigl(
        (\tildealpha - \alpha) \Delta u
        +\nabla (\tildealpha - \alpha) \cdot \nabla u\Bigr) v\, dx \right|\\
      &\leq \Bigl(
        \|\tildealpha - \alpha\|_{L^\infty(\Omega)} \|\Delta u\|_{L^2(\Omega)}
        + \|\nabla \tildealpha - \nabla \alpha\|_{L^\infty(\Omega)} \|\nabla u\|_{L^2(\Omega)}
        \Bigr) \|v\|_{L^2(\Omega)} \\
      &\leq
        \|\tildealpha -\alpha\|_{W^{1,\infty}(\Omega)} \|u\|_{H^2(\Omega)}\|v\|_{L^2(\Omega)}
      \qquad \forall v \in V.
  \end{align*}
  Hence we have shown the PDE error bound
  \begin{align*}
    \Epde = \|\tilde{\ell}-\tilde{a}(u, \cdot)\|_{L^2(\Omega)}
    \leq \|\tilde{\ell} - \ell\|_{L^2(\Omega)}
      + \|\tildealpha -\alpha\|_{W^{1,\infty}(\Omega)} \|u\|_{H^2(\Omega)}
      + \|\tildesigma-\sigma\|_{L^\infty(\Omega)} \|u\|_{L^2(\Omega)}.
  \end{align*}
\end{remark}

\begin{remark}
  It can also be shown, that such a bound on $\|a(u,\cdot) -\tilde{a}(u,\cdot)\|_{L^2(\Omega)}$
  does in general not exist if $\tildealpha-\alpha$ is not sufficiently smooth---regardless of
  the smoothness of $u$.
  To see this let $\Omega = (-1,1)$, $(\tildealpha - \alpha)(x) = \beta(x)= -\op{sgn}(x) \in L^{\infty}(\Omega)$,
  and $u \in C^\infty([-1,1])$ such that $u|_{[-0.5,0.5]}(x) = x$.
  Now let $v_{\varepsilon}(x) = \op{max}\{0, 1-{\varepsilon}^{-1}|x|\}$ for $\varepsilon<0.5$.
  Then
  \begin{align*}
    \int_\Omega \beta \nabla u \cdot \nabla v_\varepsilon\, dx
      = \int_{[-\varepsilon,\varepsilon]} (-\op{sgn}(x)) (-\op{sgn}(x) \varepsilon^{-1}) \, dx
      = 2
  \end{align*}
  while $\|v_{\varepsilon}\|_{L^2(\Omega)} \to 0$ for $\varepsilon \to 0$.
\end{remark}

Finally, we will provide an error bound for the data error term $\Edata$
in the special case of point-wise approximations
\begin{align}
  \label{eq:point_data}
  b_i = u(p_i) \approx \FINT_{B_i} u
\end{align}
for points $p_i \in B_i$.
To make sense of this expression, we need at least $u \in C(\Omega)$.
However, to derive an error bound in terms of $\op{diam}(B_i)$
we will also need to relate $u(p_i) - u(x)$ to $p_i - x$ for
points $x \in B_i$, independently of $p_i$.
Hence we need to make additions assumptions on the regularity of $u$.

\begin{proposition}
  Assume $u \in C^{0,\lambda}(\overline{\Omega})$ for $\lambda \in (0,1]$,
  i.e. $u$ is $\lambda$-Hölder continuous or Lipschitz continuous (for $\lambda=1$)
  and that the data $\widetilde{b(u,\cdot)}$ is given
  by~\eqref{eq:data_term} with~\eqref{eq:point_data}.
  Then we can bound the data error term $\Edata$ according to
  \begin{align*}
    \Edata = \|\widetilde{b(u,\cdot)} - b(u,\cdot)\|_{L^2(\Omega)}
    \leq \|u\|_{C^{0,\lambda}(\overline{\Omega})}
      M |\Omega|^{1/2} r^\lambda.
  \end{align*}
\end{proposition}
\begin{proof}
  Let $v \in L^2(\Omega)$ and $i=1,\dots,m$. Then
  \begin{align*}
    |b_i - \FINT_{B_i} u|
    = |B_i|^{-1}\left|\int_{B_i} u(p_i) - u(x) \, dx\right|
    \leq \|u\|_{C^{0,\lambda}(B_i)} \op{diam}(B_i)^\lambda
    \leq \|u\|_{C^{0,\lambda}(\overline{\Omega})} r^\lambda.
  \end{align*}
  Using this estimate and proceeding as in the proof of
  Lemma~\ref{lem:data_form_L2_cont} we get
  \begin{align*}
    |\widetilde{b(u,v)} - b(u,v)|
    &= \left|\sum_{i=1}^m \left(b_i - \FINT_{B_i}u\right) |B_i| \FINT_{B_i}v \right| \\
    &\leq \|u\|_{C^{0,\lambda}(\overline{\Omega})} r^\lambda
    \sum_{i=1}^m |B_i| \left|\FINT_{B_i}1\FINT_{B_i}v \right|
    \leq \|u\|_{C^{0,\lambda}(\overline{\Omega})} r^\lambda
      M |\Omega|^{1/2} \|v\|_{L^1(\Omega)}.
  \end{align*}
\end{proof}

Now we summarize the results including the estimates
for the PDE and data error terms.

\begin{corollary}
  \label{cor:L2_estimate}
  Let $u$ and $\tilde{u}$ be the solutions of \eqref{eq:exact_problem}
  and \eqref{eq:augmented_auxiliary_problem}, respectively.
  Furthermore assume that
  \begin{itemize}
    \item
      the problem is \emph{non-degenerate} and \emph{well-balanced},
    \item
      the data $\widetilde{b(u,\cdot)}$ is given by~\eqref{eq:data_term} and~\eqref{eq:point_data},
    \item
      the exact and inexact coefficients satisfy
      $\tildealpha-\alpha \in W^{1,\infty}(\Omega)$,
    \item
      $\partial \Omega$ is smooth enough such that \eqref{eq:boundary_terms} holds true,
    \item
      the solution satisfies $u \in H^2(\Omega) \cap C^{0,\lambda}(\overline{\Omega})$
      for $\lambda\in(0,1]$.
  \end{itemize}
  Then
  \begin{align*}
    \|\tilde{u} - u\|_{L^2(\Omega)}
      \leq
    CR^2 \bigl(
    \|\tilde{\ell} - \ell\|_{L^2(\Omega)}
    + \|\tildesigma - \sigma\|_{L^\infty(\Omega)}
    +\|\tildealpha -\alpha\|_{W^{1,\infty}(\Omega)}\bigr)
    + C r^\lambda.
  \end{align*}
\end{corollary}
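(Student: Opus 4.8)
The plan is to assemble this corollary directly from three results already in hand: the abstract error estimate of Theorem~\ref{thm:error_estimate}, the bound on $\Epde$ from the preceding remark, and the bound on $\Edata$ from the preceding proposition. No genuinely new estimate is required; the content lies in checking that the five listed hypotheses are exactly those needed to invoke each ingredient, and in tracking how the $u$-dependent quantities collapse into the generic constant $C$. First I would invoke Theorem~\ref{thm:error_estimate}. Its standing hypothesis $\tilde{\ell}-\tilde{a}(u,\cdot)\in L^2(\Omega)$ is precisely what the preceding remark establishes under $\tildealpha-\alpha\in W^{1,\infty}(\Omega)$, $u\in H^2(\Omega)$, and the boundary identity~\eqref{eq:boundary_terms}. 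Specializing estimate~\eqref{eq:data_pde_error} and using that the problem is \emph{non-degenerate} ($\eta,M\in O(1)$) and \emph{well-balanced} ($\theta^{-1}\in O(1)$), the prefactors $\eta\theta^{-1}$ and $M$ are bounded by absolute constants, so that
\begin{align*}
  \|\tilde{u}-u\|_{L^2(\Omega)} \leq CR^2 \Epde + C \Edata.
\end{align*}

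Next I would substitute the ingredient bound for the PDE term. The preceding remark yields
\begin{align*}
  \Epde \leq \|\tilde{\ell}-\ell\|_{L^2(\Omega)} + \|\tildealpha-\alpha\|_{W^{1,\infty}(\Omega)}\|u\|_{H^2(\Omega)} + \|\tildesigma-\sigma\|_{L^\infty(\Omega)}\|u\|_{L^2(\Omega)},
\end{align*}
and since $u$ is the fixed target solution, $\|u\|_{H^2(\Omega)}$ and $\|u\|_{L^2(\Omega)}$ are constants that may be absorbed into $C$, leaving the three coefficient-discrepancy terms explicit.

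For the data term the preceding proposition gives, under $u\in C^{0,\lambda}(\overline{\Omega})$ and point data~\eqref{eq:point_data}, the bound $\Edata \leq \|u\|_{C^{0,\lambda}(\overline{\Omega})}\, M\, |\Omega|^{1/2}\, r^\lambda$; here $\|u\|_{C^{0,\lambda}(\overline{\Omega})}$, $M$, and $|\Omega|^{1/2}$ are again absorbed into $C$, so $\Edata \leq C r^\lambda$. Multiplying the PDE bound by $CR^2$ and adding $C\Edata$ then produces exactly
\begin{align*}
  \|\tilde{u}-u\|_{L^2(\Omega)} \leq CR^2\bigl(\|\tilde{\ell}-\ell\|_{L^2(\Omega)} + \|\tildesigma-\sigma\|_{L^\infty(\Omega)} + \|\tildealpha-\alpha\|_{W^{1,\infty}(\Omega)}\bigr) + Cr^\lambda.
\end{align*}

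The only point demanding attention—rather than a mathematical obstacle—is bookkeeping: ensuring that every factor absorbed into $C$ is genuinely independent of the quantities ($R$, $r$, and the coefficient discrepancies) that are meant to appear explicitly, and that it is precisely the non-degeneracy and well-balancedness hypotheses that license dropping $\eta$, $M$, and $\theta^{-1}$. Since all three ingredients refer to the same fixed $u$, the same covering $(B_i)$, $(K_i)$, and the same fixed $\tildealphamin>0$, this consistency is immediate, and the corollary follows by collecting the displayed inequalities.
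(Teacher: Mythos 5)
Your proposal is correct and follows exactly the assembly the paper intends for this corollary: apply estimate~\eqref{eq:data_pde_error} of Theorem~\ref{thm:error_estimate} (whose hypothesis $\tilde{\ell}-\tilde{a}(u,\cdot)\in L^2(\Omega)$ is supplied by the remark), absorb $\eta$, $M$, $\theta^{-1}$, and $\tildealphamin^{-1}$ into $C$ via non-degeneracy and well-balancedness, then insert the remark's bound on $\Epde$ and the proposition's bound on $\Edata$, absorbing the fixed quantities $\|u\|_{H^2(\Omega)}$, $\|u\|_{L^2(\Omega)}$, $\|u\|_{C^{0,\lambda}(\overline{\Omega})}$, and $|\Omega|^{1/2}$ into $C$. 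The bookkeeping you flag (that $C$ stays independent of $R$, $r$, and the coefficient discrepancies) is handled correctly, so nothing is missing.
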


\subsection{Pure data fitting}
As a special case of the setting outlined above, one can consider
the pure data fitting problem, where nothing is known about the
partial differential equation satisfied by $u$. In this case, one
could consider using only the data in terms of a least squares
ansatz
\begin{align}
  \label{eq:data_fitting}
  \tilde{u} \in V: \qquad
  \|\tilde{u}-u\|_b^2 \leq
  \|v-u\|_b^2
  \qquad \forall v \in V,
\end{align}
or, equivalently,
\begin{align*}
  \tilde{u} \in V: \qquad
  b(\tilde{u},v) = b(u,v)
  \qquad \forall v \in V.
\end{align*}
Due to the finite rank of $b(\cdot,\cdot)$ this problem
is in general under determined and thus ill-posed.
As a remedy, this can be considered in a finite dimensional
subspace or submanifold $V_h$ of $V$ only, which may lead to
a well-posed problem. However, this comes at the price,
that the behavior of the fitted solution is largely
determined by $V_h$.

As a simple example consider $V = H^1([0,1])$ and $V_h = \mathcal{P}_{m-1}$.
Then, for small sets $B_i$, the problem essentially reduces
to interpolation problem in $\mathcal{P}_{m-1}$ which may
lead to uncontrollably large errors.

To overcome the ill-posedness in a controllable way,
we introduce the regularized problem
\begin{align}
  \label{eq:regularized_data_fitting}
  \tilde{u} \in V: \qquad
    \|\nabla \tilde{u} \|_{L^2(\Omega)}^2
    + \dataweight\|\tilde{u}-u\|_b^2
  \leq
    \|\nabla v \|_{L^2(\Omega)}^2
    + \dataweight\|v-u\|_b^2
  \qquad \forall v \in V.
\end{align}
Noting that this takes the form of~\eqref{eq:augmented_auxiliary_problem}
with $\tildealpha= 1$ and $\tilde{f}=0$
and exact data $\widetilde{b(u,\cdot)} = b(u,\cdot)$
we can utilize Theorem~\ref{thm:error_estimate} to get an error estimate.

\begin{theorem}
  \label{thm:data_fitting}
  Assume that $V \subset H^1_0(\Omega)$, $u \in V \cap H^2(\Omega)$
  and let $\tilde{u}$ be the solution of~\eqref{eq:augmented_auxiliary_problem}
  with $\tildealpha= 1$ and $\tilde{f}=0$. Then
  \begin{align}
    \| \tilde{u}-u \|_{L^2(\Omega)}
      \leq \eta \theta^{-1}\frac{R^2M}{\pi^2}
      \|\Delta u\|_{L^2(\Omega)}
      + \eta \theta^{-1} \|\widetilde{b(u,\cdot)} - b(u,\cdot)\|_{L^2(\Omega)}.
  \end{align}
\end{theorem}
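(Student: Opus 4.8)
The plan is to recognize this statement as a direct specialization of Theorem~\ref{thm:error_estimate}, specifically the estimate~\eqref{eq:data_pde_error}, to the pure data fitting situation. The regularized problem~\eqref{eq:regularized_data_fitting} is exactly~\eqref{eq:augmented_auxiliary_problem} with the choices $\tildealpha = 1$, $\tildesigma = 0$, and $\tilde{f} = 0$, so that $\tilde{a}(w,v) = \int_\Omega \nabla w \cdot \nabla v \, dx$, the lower bound $\tildealphamin = 1$, and $\tilde{\ell} = 0$. Almost everything in the bound is then inherited from~\eqref{eq:data_pde_error}; the only work is to verify the hypothesis $\tilde{\ell} - \tilde{a}(u, \cdot) \in L^2(\Omega)$ of Theorem~\ref{thm:error_estimate} and to evaluate the resulting term $\Epde$ explicitly.

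First I would compute the residual functional $\tilde{\ell} - \tilde{a}(u, \cdot)$. Since $u \in H^2(\Omega)$, integration by parts gives
\begin{align*}
  \tilde{\ell}(v) - \tilde{a}(u, v)
    = -\int_\Omega \nabla u \cdot \nabla v \, dx
    = \int_\Omega (\Delta u) v \, dx
      - \int_{\partial\Omega} \frac{\partial u}{\partial \nu} v \, ds
  \qquad \forall v \in V.
\end{align*}
Here the assumption $V \subset H^1_0(\Omega)$ is decisive: it forces $v|_{\partial\Omega} = 0$, so the boundary integral vanishes and, in contrast to the mixed boundary value situation treated in the preceding remark, no additional regularity of $\partial\Omega$ is needed. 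Consequently the functional $\tilde{\ell} - \tilde{a}(u, \cdot)$ is represented by $\Delta u \in L^2(\Omega)$, the hypothesis of Theorem~\ref{thm:error_estimate} is satisfied, and $\Epde = \|\tilde{\ell} - \tilde{a}(u, \cdot)\|_{L^2(\Omega)} = \|\Delta u\|_{L^2(\Omega)}$.

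Finally I would substitute $\tildealphamin = 1$ together with this value of $\Epde$ into~\eqref{eq:data_pde_error}. The data error term $\Edata = \|\widetilde{b(u,\cdot)} - b(u,\cdot)\|_{L^2(\Omega)}$ is carried over unchanged from the general statement, yielding
\begin{align*}
  \|\tilde{u} - u\|_{L^2(\Omega)}
    \leq \eta \theta^{-1} \frac{R^2 M}{\pi^2} \|\Delta u\|_{L^2(\Omega)}
      + \eta \theta^{-1} \|\widetilde{b(u,\cdot)} - b(u,\cdot)\|_{L^2(\Omega)}
\end{align*}
as claimed. I do not expect a genuine obstacle here; the whole argument reduces to the clean integration-by-parts identity and the observation that the homogeneous Dirichlet condition $V \subset H^1_0(\Omega)$ eliminates the boundary contribution, so that $\Epde$ collapses to $\|\Delta u\|_{L^2(\Omega)}$.
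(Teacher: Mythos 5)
Your proposal is correct and follows essentially the same route as the paper: both reduce the statement to Theorem~\ref{thm:error_estimate} (estimate~\eqref{eq:data_pde_error}) with $\tildealphamin=1$, identifying the residual $\tilde{\ell}-\tilde{a}(u,\cdot)$ with $\Delta u \in L^2(\Omega)$ via integration by parts, where $V\subset H^1_0(\Omega)$ kills the boundary term. The only cosmetic difference is that the paper phrases this as ``$u$ solves~\eqref{eq:exact_problem} with $f=-\Delta u$ and $\alpha=1$'' and then writes $\tilde{\ell}-\tilde{a}(u,\cdot)=0-a(u,\cdot)=-f=\Delta u$, whereas you carry out the integration by parts explicitly; the underlying argument is identical.
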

\begin{proof}
  We only need to note that $u \in H^2(\Omega)$
  solves the PDE~\eqref{eq:exact_problem}
  with $f = -\Delta u \in L^2(\Omega)$
  and $\alpha=1$.
  Then Theorem~\ref{thm:error_estimate} provides the error estimate with
  $\tilde{\ell}-\tilde{a}(u, \cdot) = 0 - a(u,\cdot) = -f = \Delta u$.
\end{proof}

In the case of exact data $\widetilde{b(u,\cdot)} = b(u,\cdot)$
in \eqref{eq:regularized_data_fitting}
the estimate reduces to
\begin{align*}
  \| \tilde{u}-u \|_{L^2(\Omega)}
    \leq \eta \theta^{-1}\frac{R^2M}{\pi^2}
    \|\Delta u\|_{L^2(\Omega)}.
\end{align*}
It can also be shown that the result is quasi-optimal
in a certain sense. To illustrate this, we investigate the
question of how well we can approximate $u$
only from the given data
$\FINT_{B_i} u$, $i=1\dots,m$
and the known regularity $u \in H^1_0(\Omega)$.
Unfortunately, there is an
infinite dimensional affine subspace
\begin{align*}
  V_b = \Bigl\{ v \in H_0^1(\Omega) \st \FINT_{B_i} v = \FINT_{B_i} u, i=1,\dots,b \Bigr\}.
\end{align*}
All functions in this space provide a perfect fit to
the data and cannot be distinguished in terms of the
available information. Hence the best we can afford
in view of the regularity $u \in H_0^1(\Omega)$
is to compute a norm minimizing approximation in $V_b$,
i.e.
\begin{align*}
  u_{\text{opt}} = \op{argmin}_{v \in V_b} \|\nabla v\|_{L^2(\Omega)}^2.
\end{align*}
Using the orthogonality
\begin{align*}
  (\nabla u_{\text{opt}}, \nabla (u_{\text{opt}} -v)) = 0 \qquad \forall v \in V_b
\end{align*}
and the fact that $\|u_{\text{opt}}-u\|_b = 0$
we obtain the identity
\begin{align*}
  \tilde{c}(u_{\text{opt}}-u,u_{\text{opt}}-u) = (-\nabla u, \nabla(u_{\text{opt}}-u))
\end{align*}
with $\tilde{c}(\cdot,\cdot)$ as in Theorem~\ref{thm:data_fitting}.
Now assuming the additional regularity $u \in H^2(\Omega)$ and utilizing
the coercivity from Lemma~\ref{lem:l2coercivity} we obtain
\begin{align*}
  \|u_{\text{opt}}-u\|^2_{L^2(\Omega)}
  \leq \eta\frac{R^2M}{\pi^2 } (\Delta u, u_{\text{opt}}-u)
  \leq \eta\frac{R^2M}{\pi^2 } \|\Delta u\|_{L^2(\Omega)} \|u_{\text{opt}}-u\|_{L^2(\Omega)}.
\end{align*}
Thus---if we chose the optimal weighting parameter $\dataweight$
such that $\theta=1$---the error estimate for $\tilde{u}$ coincides with the one
for $u_{\text{opt}}$ which is the energy minimizing one
among all functions that fit the data. It should be noted
that the only reason for the inequality `$\leq$'
in the estimate for $u_{\text{opt}}$
is the application of the coercivity estimate from
Lemma~\ref{lem:l2coercivity}
and the application of the Cauchy--Schwarz inequality in $L^2(\Omega)$.
Hence we cannot expect to be able to derive a better estimate
unless we sharpen the coercivity bound.


\section{Generalized Galerkin discretization}
\label{sec:discretization}

\subsection{Abstract a priori error estimate}
Now we investigate the discretization of the PDE-regularized
problem \eqref{eq:augmented_auxiliary_problem}.
To this end we consider an abstract generalized
Galerkin discretization
\begin{align}
  \label{eq:augmented_auxiliary_problem_galerkin_min}
  \tilde{u}_h = \op{argmin}_{v\in V_h} \tilde{J}(v),
\end{align}
of the minimization formulation \eqref{eq:augmented_auxiliary_problem_min}
of \eqref{eq:augmented_auxiliary_problem}
in a subset $V_h \subset V$.
We call this problem a \emph{generalized Galerkin discretization}
since we do not require that $V_h$ is a closed subspace of $V$
as in a classical Galerkin discretization.
As a consequence, existence and uniqueness
of $\tilde{u}_h$ is not guaranteed.
However, if $V_h$ is a closed subspace, then
\eqref{eq:augmented_auxiliary_problem_galerkin_min}
is equivalent to the variational equation
\begin{align}
  \label{eq:augmented_auxiliary_problem_galerkin}
  \tilde{u}_h \in V_h:
  \qquad
  \tilde{c}(\tilde{u}_h,v) = \tilde{r}(v)
  \qquad
  \forall v \in V_h,
\end{align}
and uniqueness and existence is guaranteed by the
Lax--Milgram theorem.

For several reasons we cannot directly apply the classical
C\'ea-Lemma or Galerkin orthogonality to derive an error bound:
The set $V_h$ is in general not a subspace,
we want to bound the error in the $L^2(\Omega)$-norm,
and the bilinear form incorporates data dependent weighting factors.
Furthermore we are interested in directly bounding $\|\tilde{u}_h-u\|_{L^2(\Omega)}$
and not just $\|\tilde{u}_h-\tilde{u}\|_{L^2(\Omega)}$ for
the auxiliary continuous solution $\tilde{u}$.
As a remedy we will first use a nonlinear C\'ea-Lemma
in terms of the weighted norm $\sqrt{\Gamma}\|\cdot\|_{\tilde{c}}$
and then go over to the $\|\cdot\|_{L^2(\Omega)}$ norm using
Lemma~\ref{lem:l2coercivity}.
The idea of using generalized versions of C\'ea's lemma
for discretization in subsets goes back to
\cite{GrohsHarderingSander2015,Hardering2015} where this was developed
in a metric space setting. Since our setting is more special,
we give a direct proof here.

\begin{lemma}
  \label{lem:generalized_cea}
  Let $\tilde{u}_h \in V_h$ be a solution of~\eqref{eq:augmented_auxiliary_problem_galerkin_min}
  and $\tilde{u} \in V$ the solution of~\eqref{eq:augmented_auxiliary_problem_min}.
  Then we have
  \begin{align*}
    \|\tilde{u}_h-\tilde{u}\|_{\tilde{c}}
    \leq \inf_{v\in V_h}\|v-\tilde{u}\|_{\tilde{c}}.
  \end{align*}
\end{lemma}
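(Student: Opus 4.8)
The plan is to exploit the fact that $\tilde{J}$ is a quadratic functional whose global minimizer over the \emph{whole} space $V$ is $\tilde{u}$, which is exactly what lets us avoid Galerkin orthogonality and therefore avoid any assumption that $V_h$ is a subspace. The key observation is that, since $\tilde{u}$ minimizes $\tilde{J}$ over $V$, it satisfies the variational equation $\tilde{c}(\tilde{u},w) = \tilde{r}(w)$ for \emph{all} $w \in V$ by~\eqref{eq:augmented_auxiliary_problem}. Expanding the quadratic functional around $\tilde{u}$ and inserting this identity, I expect the linear term to cancel exactly, leaving a clean "energy identity".

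Concretely, first I would establish that for every $v \in V$ one has
\begin{align}
  \label{eq:energy_identity}
  \tilde{J}(v) - \tilde{J}(\tilde{u})
  = \tfrac12 \tilde{c}(v,v) - \tilde{r}(v)
    - \tfrac12 \tilde{c}(\tilde{u},\tilde{u}) + \tilde{r}(\tilde{u})
  = \tfrac12 \|v - \tilde{u}\|_{\tilde{c}}^2,
\end{align}
where the middle equality uses $\tilde{r}(v) = \tilde{c}(\tilde{u},v)$ and $\tilde{r}(\tilde{u}) = \tilde{c}(\tilde{u},\tilde{u})$, after which the remaining terms collect into $\tfrac12\tilde{c}(v-\tilde{u},v-\tilde{u})$ by bilinearity and symmetry of $\tilde{c}(\cdot,\cdot)$. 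This identity is the heart of the argument: it converts the difference of energies into half the squared $\tilde{c}$-distance to $\tilde{u}$, and crucially it holds for \emph{any} $v \in V$, with no orthogonality or subspace structure required.

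The second step is to invoke minimality of $\tilde{u}_h$ on $V_h$. Since $V_h \subset V$ and $\tilde{u}_h$ solves~\eqref{eq:augmented_auxiliary_problem_galerkin_min}, we have $\tilde{J}(\tilde{u}_h) \leq \tilde{J}(v)$ for every $v \in V_h$. Applying~\eqref{eq:energy_identity} once with $v = \tilde{u}_h$ and once with an arbitrary $v \in V_h$ then yields
\begin{align*}
  \tfrac12 \|\tilde{u}_h - \tilde{u}\|_{\tilde{c}}^2
  = \tilde{J}(\tilde{u}_h) - \tilde{J}(\tilde{u})
  \leq \tilde{J}(v) - \tilde{J}(\tilde{u})
  = \tfrac12 \|v - \tilde{u}\|_{\tilde{c}}^2
  \qquad \forall v \in V_h.
\end{align*}
Multiplying by $2$, taking the square root, and passing to the infimum over $v \in V_h$ gives the claimed bound $\|\tilde{u}_h - \tilde{u}\|_{\tilde{c}} \leq \inf_{v \in V_h}\|v - \tilde{u}\|_{\tilde{c}}$.

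I do not anticipate a genuine obstacle here; the only conceptual point requiring care is recognizing \emph{why} the nonconvexity or non-subspace nature of $V_h$ is harmless. The subtlety is that the comparison point $\tilde{u}$ is the minimizer over all of $V$, so the first-order term in the Taylor expansion of the quadratic $\tilde{J}$ vanishes against \emph{every} admissible direction in $V \supset V_h$; one never needs the tested direction $v - \tilde{u}$ to lie in $V_h$ or to use that $\tilde{u}_h - \tilde{u}$ is $\tilde{c}$-orthogonal to $V_h$. I would flag explicitly that this is precisely the feature that makes the estimate a \emph{generalized} C\'ea lemma valid for non-subspace subsets, and I would also note that existence of a minimizer $\tilde{u}_h$ is assumed (it is not guaranteed for general $V_h$), so the statement is understood for any solution of~\eqref{eq:augmented_auxiliary_problem_galerkin_min} should one exist.
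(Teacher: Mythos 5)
Your proof is correct and follows essentially the same route as the paper: both convert the squared $\tilde{c}$-distance to $\tilde{u}$ into the energy $\tilde{J}$ via the variational equation $\tilde{c}(\tilde{u},\cdot)=\tilde{r}(\cdot)$ on all of $V$, and then apply minimality of $\tilde{u}_h$ over $V_h$. Your identity $\tilde{J}(v)-\tilde{J}(\tilde{u})=\tfrac12\|v-\tilde{u}\|_{\tilde{c}}^2$ is just a rearranged form of the paper's $\|v-\tilde{u}\|_{\tilde{c}}^2 = 2\tilde{J}(v)+\|\tilde{u}\|_{\tilde{c}}^2$, so there is no substantive difference.
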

\begin{proof}
  First we note that, in view of \eqref{eq:augmented_auxiliary_problem},
  we have for arbitrary $v\in V$
  \begin{align*}
    \|v-\tilde{u}\|_{\tilde{c}}^2
    = \|v\|_{\tilde{c}}^2 - 2\tilde{c}(\tilde{u},v) + \|\tilde{u}\|_{\tilde{c}}^2
    = \|v\|_{\tilde{c}}^2 - 2\tilde{r}(v) + \|\tilde{u}\|_{\tilde{c}}^2
    = 2 \tilde{J}(v) + \|\tilde{u}\|_{\tilde{c}}^2.
  \end{align*}
  Thus we get for the minimizer $\tilde{u}_h$ of $\tilde{J}$ in $V_h$
  \begin{align*}
    \|\tilde{u}_h-\tilde{u}\|_{\tilde{c}}^2
    = 2 \tilde{J}(\tilde{u}_h) + \|\tilde{u}\|_{\tilde{c}}^2
    \leq 2 \tilde{J}(v) + \|\tilde{u}\|_{\tilde{c}}^2
    = \|v-\tilde{u}\|_{\tilde{c}}^2.
  \end{align*}
\end{proof}

\begin{theorem}
  \label{thm:discretization_estimate}
  Let $\tilde{u}_h \in V_h$ be a solution of~\eqref{eq:augmented_auxiliary_problem_galerkin_min}
  and $u \in V$ the solution of~\eqref{eq:exact_problem}.
  Furthermore assume that $\tilde{\ell}-\tilde{a}(u, \cdot) \in L^2(\Omega)$.
  Then $\tilde{u}_h$ satisfies the error bound
  \begin{align*}
    \|\tilde{u}_h - u\|_{L^2(\Omega)}
      \leq \sqrt{\Gamma} \|\tilde{u}_h - u\|_{\tilde{c}}
      &\leq 2\sqrt{\Gamma} \|\tilde{u} - u\|_{\tilde{c}} + \inf_{v \in V_h} \sqrt{\Gamma}\|v-u\|_{\tilde{c}}\\
      &\leq 2\Gamma \Bigl( \Epde + \dataweight \Edata \Bigr) + \inf_{v \in V_h} \sqrt{\Gamma}\|v-u\|_{\tilde{c}}
  \end{align*}
  with the PDE- and data error terms $\Epde$ and $\Edata$
  as defined in Theorem~\ref{thm:error_estimate}.
\end{theorem}
\begin{proof}
  Using Lemma~\ref{lem:l2coercivity}, the triangle inequality,
  and the generalized C\'ea-Lemma \ref{lem:generalized_cea} we get
  \begin{align*}
    \|\tilde{u}_h - u\|_{L^2(\Omega)}
      &\leq \sqrt{\Gamma} \|\tilde{u}_h - u\|_{\tilde{c}}
      \leq \sqrt{\Gamma} \|\tilde{u}_h - \tilde{u}\|_{\tilde{c}}
        + \sqrt{\Gamma}\|\tilde{u} - u\|_{\tilde{c}}\\
      &\leq \sqrt{\Gamma} \|v - \tilde{u}\|_{\tilde{c}}
        + \sqrt{\Gamma}\|\tilde{u} - u\|_{\tilde{c}}
      \leq \sqrt{\Gamma} \|v - u\|_{\tilde{c}}
        + 2\sqrt{\Gamma}\|\tilde{u} - u\|_{\tilde{c}}
      \qquad \forall v \in V_h.
  \end{align*}
  Now using Theorem~\ref{thm:error_estimate} we get the assertion.
\end{proof}

While the first error term is the same as the one
in the continuous case, we still need to take care for the best
approximation error because it involves the weighted data dependent
norm $\sqrt{\Gamma}\|\cdot\|_{\tilde{c}}$.
The main ingredient is the following bound on this norm.

\begin{lemma}
  \label{lem:c_norm_bound}
  The weighted norm $\sqrt{\Gamma}\|\cdot\|_{\tilde{c}}$ can be bounded according to
  \begin{align*}
    \sqrt{\Gamma} \|v\|_{\tilde{c}}
      \leq \sqrt{\eta\theta^{-1}M}\left(
        \frac{R}{\pi} \frac{\sqrt{\tildealphamax}}{\sqrt{\tildealphamin}} \|\nabla v\|_{L^2(\Omega)}
        + \|v\|_{L^2(\Omega)}
        \right).
  \end{align*}
\end{lemma}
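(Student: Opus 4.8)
The plan is to reduce the estimate to the defining split $\|v\|_{\tilde{c}}^2 = \|v\|_{\tilde{a}}^2 + \dataweight\|v\|_b^2$ and to bound the two resulting contributions separately against the two estimates for $\Gamma$ recorded in~\eqref{eq:weight_constant_bounds}. Since $\sqrt{x+y}\le \sqrt{x}+\sqrt{y}$ holds for all $x,y\ge 0$, I would first pass to the two-term form
\begin{align*}
  \sqrt{\Gamma}\,\|v\|_{\tilde{c}} \le \sqrt{\Gamma}\,\|v\|_{\tilde{a}} + \sqrt{\Gamma\dataweight}\,\|v\|_b,
\end{align*}
which already mirrors the structure of the claimed right-hand side, one summand per term.

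For the data contribution I would use the bound $\Gamma \le \eta\theta^{-1}\invdataweight$ from~\eqref{eq:weight_constant_bounds}. Because $\invdataweight\dataweight = 1$, this gives $\Gamma\dataweight \le \eta\theta^{-1}$, and combining it with the $L^2(\Omega)$-continuity estimate $\|v\|_b^2 \le M\|v\|_{L^2(\Omega)}^2$ from Lemma~\ref{lem:data_form_L2_cont} yields $\sqrt{\Gamma\dataweight}\,\|v\|_b \le \sqrt{\eta\theta^{-1}M}\,\|v\|_{L^2(\Omega)}$, i.e.\ precisely the second summand of the asserted bound.

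For the energy contribution I would invoke the complementary bound $\Gamma \le \eta\theta^{-1}\frac{R^2M}{\pi^2\tildealphamin}$ together with the upper bound $\tildealpha \le \tildealphamax$ on the principal coefficient. Applied to the gradient part of $\|v\|_{\tilde{a}}^2 = \int_\Omega \tildealpha\,|\nabla v|^2\,dx + \int_\Omega \tildesigma\, v^2\,dx$, these produce $\sqrt{\eta\theta^{-1}M}\,\frac{R}{\pi}\frac{\sqrt{\tildealphamax}}{\sqrt{\tildealphamin}}\,\|\nabla v\|_{L^2(\Omega)}$, which is exactly the first summand.

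I expect the delicate point to be the reaction term $\int_\Omega \tildesigma\, v^2\,dx$ hidden in $\|v\|_{\tilde{a}}^2$: it matches neither of the two clean $\Gamma$-bounds directly and must be carried into the $\|v\|_{L^2(\Omega)}$-summand. In the main applications of the lemma this term is absent (for instance $\tildesigma = 0$ in Theorem~\ref{thm:data_fitting}), so the two estimates above close the argument immediately. In the general case the term contributes an extra multiple $\Gamma\,\tildesigmamax\,\|v\|_{L^2(\Omega)}^2$, which is dominated by $\eta\theta^{-1}M\,\|v\|_{L^2(\Omega)}^2$ in the relevant small-$R$ regime since $\Gamma\,\tildesigmamax \le \eta\theta^{-1}\frac{R^2M\tildesigmamax}{\pi^2\tildealphamin}$; absorbing it into the $L^2(\Omega)$-term cleanly is where I would take the most care.
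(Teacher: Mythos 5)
Your proposal follows exactly the same route as the paper's (one-line) proof: both arguments combine the two bounds on $\Gamma$ from \eqref{eq:weight_constant_bounds} with Lemma~\ref{lem:data_form_L2_cont} for the data part and with $\tildealpha \leq \tildealphamax$ for the gradient part; whether you split via $\sqrt{x+y}\leq\sqrt{x}+\sqrt{y}$ before or after applying these bounds is immaterial. Your treatment of the gradient and data summands coincides with the paper's computation term by term.

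The ``delicate point'' you flag is not a weakness of your write-up but a genuine gap in the paper's own proof. The paper's displayed inequality
\begin{align*}
  \Gamma \|v\|_{\tilde{c}}^2
    \leq \eta\theta^{-1}M\left(
      \frac{R^2 \tildealphamax}{\pi^2 \tildealphamin} \|\nabla v\|_{L^2(\Omega)}^2
      + \|v\|^2_{L^2(\Omega)}
      \right)
\end{align*}
silently discards the reaction contribution $\Gamma\int_\Omega \tildesigma\, v^2\,dx$ contained in $\|v\|_{\tilde{a}}^2$, and without a restriction on $\tildesigma$ the lemma as stated is false: for constant $v\equiv 1$ (admissible, e.g., in the pure Neumann setting $V=H^1(\Omega)$ used in the paper's experiments) one has $\|\nabla v\|_{L^2(\Omega)}=0$, while $\Gamma\|v\|_{\tilde{c}}^2 \geq \eta\frac{R^2M}{\pi^2\tildealphamin}\int_\Omega\tildesigma\,dx$, which exceeds $\eta\theta^{-1}M\|v\|_{L^2(\Omega)}^2$ as soon as $\tildesigma$ is a constant larger than $\theta^{-1}\pi^2\tildealphamin/R^2$. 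Your absorption strategy is the correct repair: from $\Gamma\,\tildesigmamax \leq \eta\theta^{-1}\frac{R^2M}{\pi^2\tildealphamin}\tildesigmamax$ the reaction term contributes at most $\sqrt{\eta\theta^{-1}M}\,\frac{R}{\pi}\sqrt{\tildesigmamax/\tildealphamin}\;\|v\|_{L^2(\Omega)}$ to the right-hand side, so the lemma holds verbatim when $\tildesigma=0$ (the case used in Theorem~\ref{thm:data_fitting}), and in general with the coefficient of $\|v\|_{L^2(\Omega)}$ enlarged by the factor $1+\frac{R}{\pi}\sqrt{\tildesigmamax/\tildealphamin}$, which is $O(1)$ in the \emph{non-degenerate}, \emph{well-balanced} regime, so the subsequent corollaries are unaffected. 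In short, your proof is at least as complete as the paper's, and more honest about where the constant comes from.
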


\begin{proof}
  As a direct consequence of the bounds in~\eqref{eq:weight_constant_bounds}
  and Lemma~\ref{lem:data_form_L2_cont} we get
  \begin{align*}
    \Gamma \|v\|_{\tilde{c}}^2
      \leq \eta\theta^{-1}M\left(
        \frac{R^2 \tildealphamax}{\pi^2 \tildealphamin} \|\nabla v\|_{L^2(\Omega)}^2
        + \|v\|^2_{L^2(\Omega)}
        \right).
  \end{align*}
\end{proof}

To interpret this estimate we again consider a
\emph{non-degenerate}, \emph{well-balanced} problem.
In this case the estimate in
Lemma~\ref{lem:c_norm_bound}
takes the form
\begin{align*}
  \sqrt{\Gamma} \|v\|_{\tilde{c}} \leq C \Bigl(R \|\nabla v\|_{L^2(\Omega)} + \|v\|_{L^2(\Omega)} \Bigr)
\end{align*}
where $R$ is bounded and even decreasing if the data points cover the domain better and better.
We summarize the result in the following corollary.

\begin{corollary}
  \label{cor:l2_discretization_estimate}
  Let $\tilde{u}_h \in V_h$ be a solution of~\eqref{eq:augmented_auxiliary_problem_galerkin_min}
  and $u \in V$ the solution of~\eqref{eq:exact_problem}.
  Furthermore assume that $\tilde{\ell}-\tilde{a}(u, \cdot) \in L^2(\Omega)$
  and that the problem is \emph{non-degenerate} and \emph{well-balanced}. Then
  \begin{align*}
    \|\tilde{u}_h - u\|_{L^2(\Omega)}
      \leq C \Eapprox + CR^2 \Epde + C \Edata
  \end{align*}
  with the PDE and data error terms as in Theorem~\ref{thm:error_estimate}
  and the approximation-error term
  \begin{align*}
    \Eapprox &\colonequals \inf_{v\in V_h} \Bigl(R \|\nabla(v-u)\|_{L^2(\Omega)} + \|v-u\|_{L^2(\Omega)} \Bigr).
  \end{align*}
\end{corollary}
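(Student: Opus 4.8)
The plan is to assemble the three pieces already established: the abstract discretization bound of Theorem~\ref{thm:discretization_estimate}, the weighted-norm estimate of Lemma~\ref{lem:c_norm_bound}, and the $O(1)$-bounds on $\eta$, $M$, and $\theta^{-1}$ supplied by the non-degenerate and well-balanced hypotheses of Definition~\ref{def:nondegenerate_wellbalanced}. The starting point is Theorem~\ref{thm:discretization_estimate}, which already gives
\begin{align*}
  \|\tilde{u}_h - u\|_{L^2(\Omega)}
    \leq 2\Gamma\bigl(\Epde + \dataweight\Edata\bigr)
    + \inf_{v\in V_h}\sqrt{\Gamma}\,\|v-u\|_{\tilde{c}}.
\end{align*}
It then suffices to bound each of the three contributions by the corresponding term in the claim, with a constant $C$ independent of $R$ and of the data decomposition.

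For the PDE term I would use the first bound in \eqref{eq:weight_constant_bounds}, $\Gamma\leq \eta\theta^{-1}\frac{R^2M}{\pi^2\tildealphamin}$, so that $2\Gamma\Epde \leq 2\eta\theta^{-1}\frac{M}{\pi^2\tildealphamin}R^2\Epde$. By Definition~\ref{def:nondegenerate_wellbalanced} the factors $\eta$, $M$, $\theta^{-1}$ are $O(1)$, while $\tildealphamin$ and $\pi$ are fixed, so the whole prefactor is absorbed into a constant and $2\Gamma\Epde\leq CR^2\Epde$. For the data term the decisive point is the cancellation of the weight $\dataweight$ against the factor $\invdataweight$ in the second bound of \eqref{eq:weight_constant_bounds}: from $\Gamma\leq\eta\theta^{-1}\invdataweight$ we obtain $2\Gamma\dataweight\Edata \leq 2\eta\theta^{-1}\invdataweight\dataweight\Edata = 2\eta\theta^{-1}\Edata = C\Edata$, with no residual dependence on $R$ or $\delta$. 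This cancellation is what makes the balancing of the PDE and data terms work, and I expect it to be the only step that genuinely requires care.

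For the best-approximation term I would apply Lemma~\ref{lem:c_norm_bound} with $v$ replaced by $v-u$, which gives
\begin{align*}
  \sqrt{\Gamma}\,\|v-u\|_{\tilde{c}}
    \leq \sqrt{\eta\theta^{-1}M}\left(
      \frac{R}{\pi}\frac{\sqrt{\tildealphamax}}{\sqrt{\tildealphamin}}
        \|\nabla(v-u)\|_{L^2(\Omega)}
      + \|v-u\|_{L^2(\Omega)}\right).
\end{align*}
Invoking Definition~\ref{def:nondegenerate_wellbalanced} once more, the prefactor $\sqrt{\eta\theta^{-1}M}$ is $O(1)$ and $\frac{1}{\pi}\sqrt{\tildealphamax/\tildealphamin}$ is fixed, whence $\sqrt{\Gamma}\,\|v-u\|_{\tilde{c}}\leq C\bigl(R\|\nabla(v-u)\|_{L^2(\Omega)}+\|v-u\|_{L^2(\Omega)}\bigr)$. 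Passing to the infimum over $v\in V_h$ turns the right-hand side into $C\Eapprox$. Combining the three bounds and collecting the finitely many constants into a single generic $C$ then yields the assertion; since no estimate beyond those already proved is needed, the argument reduces to careful bookkeeping of the constants.
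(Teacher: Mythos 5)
Your proposal is correct and follows essentially the same route as the paper: the corollary is obtained by combining Theorem~\ref{thm:discretization_estimate} with the bounds on $\Gamma$ from \eqref{eq:weight_constant_bounds} (where $\invdataweight\dataweight=1$ removes the weight from the data term) and Lemma~\ref{lem:c_norm_bound} for the best-approximation term, absorbing the $O(1)$ factors $\eta$, $M$, $\theta^{-1}$ guaranteed by Definition~\ref{def:nondegenerate_wellbalanced} into generic constants. The bookkeeping of constants is exactly as the paper intends, so nothing is missing.
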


Inserting the bounds on the PDE and data error we finally get:

\begin{corollary}
  \label{cor:total_discretization_estimate}
  Let $\tilde{u}_h \in V_h$ be a solution of~\eqref{eq:augmented_auxiliary_problem_galerkin_min}
  and $u \in V$ the solution of~\eqref{eq:exact_problem}.
  Furthermore assume that the assumptions of Corollary~\ref{cor:L2_estimate} hold true.
  Then
  \begin{multline*}
    \|\tilde{u}_h - u\|_{L^2(\Omega)}
      \leq
    CR^2 \bigl(
    \|\tilde{\ell} - \ell\|_{L^2(\Omega)}
    + \|\tildesigma - \sigma\|_{L^\infty(\Omega)}
    +\|\tildealpha -\alpha\|_{W^{1,\infty}(\Omega)}\bigr)\\
    + C r^\lambda
    + C\inf_{v\in V_h} \Bigl(R \|\nabla(v-u)\|_{L^2(\Omega)} + \|v-u\|_{L^2(\Omega)} \Bigr).
  \end{multline*}
\end{corollary}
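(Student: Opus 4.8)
The plan is to obtain this estimate by simply combining the abstract discretization bound of Corollary~\ref{cor:l2_discretization_estimate} with the explicit bounds on the PDE error term $\Epde$ and the data error term $\Edata$ that were established earlier under the stated regularity assumptions. Since the hypotheses of Corollary~\ref{cor:L2_estimate} are assumed, the problem is \emph{non-degenerate} and \emph{well-balanced}, and, as shown in the Remark following Theorem~\ref{thm:error_estimate}, the smoothness of $\partial\Omega$, $u$, and $\tildealpha-\alpha$ guarantees $\tilde{\ell}-\tilde{a}(u,\cdot)\in L^2(\Omega)$. Hence all hypotheses needed to invoke Corollary~\ref{cor:l2_discretization_estimate} are met, and I would start from
\begin{align*}
  \|\tilde{u}_h - u\|_{L^2(\Omega)}
    \leq C\Eapprox + CR^2\Epde + C\Edata.
\end{align*}

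Next I would substitute the PDE error bound derived in that same Remark, namely
\begin{align*}
  \Epde \leq \|\tilde{\ell}-\ell\|_{L^2(\Omega)}
    + \|\tildealpha-\alpha\|_{W^{1,\infty}(\Omega)}\|u\|_{H^2(\Omega)}
    + \|\tildesigma-\sigma\|_{L^\infty(\Omega)}\|u\|_{L^2(\Omega)},
\end{align*}
and the data error bound from the preceding Proposition,
\begin{align*}
  \Edata \leq \|u\|_{C^{0,\lambda}(\overline{\Omega})}\, M\, |\Omega|^{1/2}\, r^\lambda,
\end{align*}
into the right-hand side. This already produces all the terms appearing in the claim, with $\Eapprox$ reproducing the best-approximation term verbatim.

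The only remaining step is a bookkeeping one: since $u$ is the fixed solution of~\eqref{eq:exact_problem}, the quantities $\|u\|_{H^2(\Omega)}$, $\|u\|_{L^2(\Omega)}$, and $\|u\|_{C^{0,\lambda}(\overline{\Omega})}$, together with the geometric constants $M$ and $|\Omega|^{1/2}$, are finite and independent of the discretization, so they may be absorbed into the generic constant $C$. Collecting the three contributions to $R^2\Epde$ yields the single factor $CR^2(\|\tilde{\ell}-\ell\|_{L^2(\Omega)} + \|\tildesigma-\sigma\|_{L^\infty(\Omega)} + \|\tildealpha-\alpha\|_{W^{1,\infty}(\Omega)})$, while $C\Edata$ becomes $Cr^\lambda$, giving the asserted inequality.

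There is no genuine analytical obstacle here: the statement is a summary corollary, and the real work was done in establishing Corollary~\ref{cor:l2_discretization_estimate}, the PDE-error Remark, and the data-error Proposition. The only point requiring slight care is to be explicit that the constant $C$ now depends on $u$ (through its $H^2$, $L^2$, and Hölder norms) and on the covering geometry, which is admissible for an a priori bound of this type.
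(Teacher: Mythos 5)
Your proposal is correct and follows exactly the route the paper takes: Corollary~\ref{cor:total_discretization_estimate} is obtained by inserting the PDE-error bound from the Remark and the data-error bound from the Proposition into Corollary~\ref{cor:l2_discretization_estimate}, absorbing $\|u\|_{H^2(\Omega)}$, $\|u\|_{L^2(\Omega)}$, $\|u\|_{C^{0,\lambda}(\overline{\Omega})}$, $M$, and $|\Omega|^{1/2}$ into the generic constant $C$. Your remark that the hypotheses of Corollary~\ref{cor:L2_estimate} guarantee both the applicability of the abstract discretization bound and the two error-term estimates is exactly the bookkeeping the paper leaves implicit.
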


It is important to note that we neither require that
$V_h$ is a subspace, nor that the solution to
\eqref{eq:augmented_auxiliary_problem_galerkin_min}
is unique.
Hence the error estimate is in principle applicable to
nonlinear approximation schemes.
However, it will in general be hard to compute
a global minimizer in $V_h$ as required
in \eqref{eq:augmented_auxiliary_problem_galerkin_min}
since practical optimization schemes often at most guarantee
local optimality.

In case of inexact or local minimization we can at least bound
the error in terms of the algebraic energy error.
Let $\tilde{\tilde{u}}_h$ an approximation of $\tilde{u}_h$.
Then using a straight forward modification of the proof
of Lemma~\ref{lem:generalized_cea} we get
\begin{align*}
  \|\tilde{\tilde{u}}_h-\tilde{u}\|_{\tilde{c}}
  \leq \inf_{v\in V_h}\|v-\tilde{u}\|_{\tilde{c}}
  + \sqrt{2(\tilde{J}(\tilde{\tilde{u}}_h) - \tilde{J}(\tilde{u}_h))}.
\end{align*}
Thus, if we want to bound
$\|\tilde{\tilde{u}}_h -u\|_{L^2(\Omega)}$ instead of
$\|\tilde{u}_h -u\|_{L^2(\Omega)}$
we have to add the algebraic error term
\begin{align*}
  \sqrt{2\Gamma(\tilde{J}(\tilde{\tilde{u}}_h) - \tilde{J}(\tilde{u}_h))}
  \leq C R \sqrt{\tilde{J}(\tilde{\tilde{u}}_h) - \tilde{J}(\tilde{u}_h)}
\end{align*}
to the right hand sides of the estimates in
Theorem~\ref{thm:discretization_estimate},
Corollary~\ref{cor:l2_discretization_estimate}, and
Corollary~\ref{cor:total_discretization_estimate}.

\subsection{Finite element discretization}

As an example for a linear Galerkin discretization we apply
the abstract error bound to a finite element ansatz.
To this end let $V_h \subset V$ be a conforming Lagrange finite
element space of order $k$
(piecewise polynomials in $\mathcal{P}_k$ for simplex elements
or piecewise tensor-polynomials in $\mathcal{Q}_k$ for cubic elements)
on a triangulation with mesh size $h$.
Then we can apply the classical finite element
interpolation error estimate which,
in the present special case, provides:

\begin{proposition}
  Let $\Pi_h : C^0(\overline{\Omega}) \to V_h$ interpolation operator
  and $u \in H^{k+1}(\Omega)\cap C^0(\overline{\Omega})$.
  Then
  \begin{align*}
    \|u-\Pi_hu\|_{L^2(\Omega)}
      &\leq C h^{k+1} |u|_{H^{k+1}(\Omega)}, &
    \|u-\Pi_hu\|_{H^1(\Omega)}
      &\leq C h^{k} |u|_{H^{k+1}(\Omega)}.
  \end{align*}
  with a constant $C$ depending only on the shape
  regularity of the triangulation.
  Here $|\cdot|_{H^{k+1}(\Omega)}$ denotes the
  $H^{k+1}(\Omega)$-semi-norm
  containing only derivatives of order $k+1$.
\end{proposition}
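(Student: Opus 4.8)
The plan is to reduce everything to a single reference element and invoke the Bramble--Hilbert lemma, which is the standard strategy for such interpolation estimates. First I would localize: since the triangulation partitions $\Omega$, both $\|u-\Pi_h u\|_{L^2(\Omega)}^2$ and $|u-\Pi_h u|_{H^1(\Omega)}^2$ split into sums over the elements $T$, and on each $T$ the global interpolant coincides with the local Lagrange interpolant $\Pi_T u$ determined by the nodes on $T$. It therefore suffices to bound $\|u-\Pi_T u\|_{L^2(T)}$ and $|u-\Pi_T u|_{H^1(T)}$ elementwise and sum at the end.

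Second, I would pull back to a fixed reference element $\hat T$ through the affine map $F_T(\hat x) = B_T \hat x + b_T$ with $F_T(\hat T) = T$. Since Lagrange nodes are mapped to Lagrange nodes, interpolation commutes with the pullback, so with $\hat u = u \circ F_T$ one has $(u-\Pi_T u)\circ F_T = \hat u - \hat\Pi \hat u$, where $\hat\Pi$ is the reference interpolant. The decisive algebraic property is that $\hat\Pi$ reproduces polynomials of degree at most $k$, so $I-\hat\Pi$ annihilates $\mathcal{P}_k(\hat T)$.

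Third is the core estimate. Because $I-\hat\Pi$ is bounded on $H^{k+1}(\hat T)$ and vanishes on $\mathcal{P}_k(\hat T)$, the Bramble--Hilbert lemma gives
\[
  |\hat u - \hat\Pi \hat u|_{H^m(\hat T)} \le C\, |\hat u|_{H^{k+1}(\hat T)}, \qquad m=0,1,
\]
with $C$ depending only on $\hat T$ and $k$. The remaining work is a scaling argument based on the standard change-of-variable bounds $|\hat v|_{H^m(\hat T)} \le C \|B_T\|^m |\det B_T|^{-1/2} |v|_{H^m(T)}$ together with the analogous reverse estimate involving $\|B_T^{-1}\|$ and $|\det B_T|^{1/2}$. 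Shape regularity yields $\|B_T\| \le C h$ and $\|B_T^{-1}\| \le C h^{-1}$ uniformly over the mesh, and combining these to transport the reference inequality back to $T$ produces $|u-\Pi_T u|_{H^m(T)} \le C h^{k+1-m} |u|_{H^{k+1}(T)}$, i.e. the factor $h^{k+1}$ for $m=0$ and $h^{k}$ for $m=1$.

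Finally, summing the squared elementwise bounds and using $\sum_T |u|_{H^{k+1}(T)}^2 = |u|_{H^{k+1}(\Omega)}^2$ gives the $L^2(\Omega)$ estimate from the $m=0$ case, while combining the $m=0$ and $m=1$ cases (and noting $h^{k+1}\le C h^k$) yields the full $H^1(\Omega)$ estimate. The only genuinely delicate point is the bookkeeping of the powers of $\|B_T\|$, $\|B_T^{-1}\|$, and $|\det B_T|$ in the scaling step, and checking that shape regularity renders every constant independent of $T$; everything else is routine once the polynomial-reproduction property of $\hat\Pi$ is recorded. As this is a textbook result, I would cite a standard reference rather than carry out the scaling computations in detail.
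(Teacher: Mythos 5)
Your outline is exactly the standard Bramble--Hilbert/affine-scaling argument underlying the result, and the paper itself gives no independent proof: it simply cites \cite[Theorem~3.2.1 and Remark~3.2.2]{Ciarlet1978}, which is precisely this argument. Since you correctly identify the key steps (polynomial reproduction, reference-element estimate, shape-regular scaling, summation) and would likewise defer the details to a standard reference, your proposal matches the paper's approach.
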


For a proof we refer to \cite[Theorem~3.2.1 and Remark~3.2.2]{Ciarlet1978}.
Inserting the interpolation error estimate in the
abstract error bound we get the following total error bound.

\begin{corollary}
  Let $\tilde{u}_h \in V_h$ be a solution of~\eqref{eq:augmented_auxiliary_problem_galerkin_min}
  and $u \in V$ the solution of~\eqref{eq:exact_problem}.
  Furthermore assume that the assumptions of Corollary~\ref{cor:L2_estimate} and $u \in H^{k+1}(\Omega)$ hold true.
  Then
  \begin{multline*}
    \|\tilde{u}_h - u\|_{L^2(\Omega)}
      \leq
    CR^2 \bigl(
    \|\tilde{\ell} - \ell\|_{L^2(\Omega)}
    + \|\tildesigma - \sigma\|_{L^\infty(\Omega)}
    +\|\tildealpha -\alpha\|_{W^{1,\infty}(\Omega)}\bigr)
    + C r^\lambda
    + C Rh^{k} + Ch^{k+1}.
  \end{multline*}
\end{corollary}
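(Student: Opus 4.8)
The plan is to specialize the abstract generalized-Galerkin bound of Corollary~\ref{cor:total_discretization_estimate} to the finite element space $V_h$ by estimating its best-approximation term with the nodal interpolant. Since the hypotheses of Corollary~\ref{cor:L2_estimate} are assumed, Corollary~\ref{cor:total_discretization_estimate} applies verbatim and already delivers the PDE-error contribution $CR^2(\|\tilde{\ell}-\ell\|_{L^2(\Omega)} + \|\tildesigma-\sigma\|_{L^\infty(\Omega)} + \|\tildealpha-\alpha\|_{W^{1,\infty}(\Omega)})$ together with the data-error contribution $Cr^\lambda$. Thus only the approximation-error term
\begin{align*}
  \Eapprox = \inf_{v\in V_h}\Bigl(R\|\nabla(v-u)\|_{L^2(\Omega)} + \|v-u\|_{L^2(\Omega)}\Bigr)
\end{align*}
needs to be controlled.

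The key step is to drop the infimum by inserting the specific competitor $v=\Pi_h u\in V_h$, which is admissible because the assumptions of Corollary~\ref{cor:L2_estimate} include $u\in C^{0,\lambda}(\overline{\Omega})$, so that $u$ is continuous and the nodal interpolant $\Pi_h u$ is well defined. Applying the two interpolation estimates of the preceding Proposition then gives $\|\nabla(\Pi_h u-u)\|_{L^2(\Omega)}\le \|u-\Pi_h u\|_{H^1(\Omega)}\le Ch^k|u|_{H^{k+1}(\Omega)}$ and $\|\Pi_h u-u\|_{L^2(\Omega)}\le Ch^{k+1}|u|_{H^{k+1}(\Omega)}$. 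Inserting both into $\Eapprox$ yields $\Eapprox \le CRh^k|u|_{H^{k+1}(\Omega)} + Ch^{k+1}|u|_{H^{k+1}(\Omega)}$, where the $R$-weighted gradient term produces the $h^k$ power and the unweighted $L^2$ term the $h^{k+1}$ power.

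Finally, since $u$ is fixed the seminorm $|u|_{H^{k+1}(\Omega)}$ is a constant and can be absorbed into $C$, giving $C\Eapprox\le CRh^k + Ch^{k+1}$. Substituting this back into Corollary~\ref{cor:total_discretization_estimate} produces the claimed bound. I do not expect a genuine obstacle here: the result is a routine specialization of the abstract Galerkin estimate to the finite element setting, and the only points requiring attention are the admissibility of $\Pi_h u$ (guaranteed by the continuity of $u$) and the bookkeeping that matches the $R\|\nabla\cdot\|_{L^2(\Omega)}$ and $\|\cdot\|_{L^2(\Omega)}$ parts of $\Eapprox$ to their respective $h$-powers.
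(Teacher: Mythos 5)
Your proposal is correct and follows exactly the paper's route: the paper likewise obtains this corollary by inserting the interpolation error estimates of the preceding Proposition (with the competitor $\Pi_h u$, admissible since $u \in C^{0,\lambda}(\overline{\Omega})$ by the assumptions of Corollary~\ref{cor:L2_estimate}) into the approximation-error term of Corollary~\ref{cor:total_discretization_estimate} and absorbing $|u|_{H^{k+1}(\Omega)}$ into the constants.
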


\subsection{A heuristic strategy for parameter selection}
\label{subsec:parameter_strategy}

To guarantee \emph{non-degenerate} and \emph{well-balanced}
problem in the sense of Definition~\ref{def:nondegenerate_wellbalanced}
the size of the averaging sets $B_i$ and the parameter
$\invdataweight$ should be carefully selected
such that the quotient $|K_i|/|B_i|$ is bounded
and such that $\invdataweight$ scales like $R^2 M$.
In applications, where data points $p_1,\dots,p_m \in \Omega$
are given we can in general not assume that one is able to
compute a decomposition into sets $K_i$ and their maximal diameter $R$ exactly.
While this is in principle possible using a Voronoi
decomposition, computing the latter is in general
much to costly.
Hence we propose a heuristic strategy to determine the
sets $B_i$ and estimates for $R$ and $\invdataweight$
under the assumption of uniformly distributed data points $p_i$.

To this end we make the assumption that $\Omega$ fits
into a cube $[0,D]^d$.
Then we can expected that the points are in average uniformly spaced.
For an ideal uniformly spaced distribution of $m$ points
in $[0,D]^d$
we can place $m$ non-overlapping cubes $\hat{K}_i$
with edge length $\hat{L}=Dm^{-1/d}$
on a uniform lattice into $\Omega=[0,D]^d$.
The diameter of these boxes is
\begin{align*}
  \hat{R} \colonequals \hat{L}\sqrt{d} = D m^{-1/d} \sqrt{d}
\end{align*}
which we will use as estimate for $R$.
In order to fix $|\hat{K}_i|/|B_i|=Q$ for a parameter $Q > 1$,
we will use cubes $B_i$ centered at $p_i$ with edge length $\hat{l}$
\begin{align*}
  B_i \colonequals p_i + \hat{l}[-0.5,0.5]^d,
  \qquad
  \hat{l}
    \colonequals \hat{L}Q^{-1/d}
    = D(mQ)^{-1/d}.
\end{align*}
Since we cannot control $M$ explicitly, we assume $M \in O(1)$ and
select
\begin{align*}
  \invdataweight
    \colonequals \frac{\hat{R}^2}{\pi^2\tildealphamin}
    = \frac{D^2 m^{-2/d} d}{\pi^2\tildealphamin}.
\end{align*}

\section{Numerical experiments}
\label{sec:numerical_results}

\subsection{A smooth test problem}
Finally we illustrate the theoretical findings using numerical
experiments for test problems. To this end we consider an example
problem from~\cite{EYu2018} given by
\begin{align*}
  -\Delta u + \sigma u &= f \qquad \text{in }\Omega, &
  \frac{\partial u}{\partial n} &=0 \qquad \text{on }\Omega
\end{align*}
with $\Omega=[0,1]^d$.
For the Eigenfunction $u(x) = \sum_{i=1}^d \cos(\pi x_i)$
of the Laplacian and $\sigma=\pi^2$ this equation is
satisfied with $f(x) = (\pi^2+\sigma)u(x) = 2\pi^2 u(x)$.
We do not want to hide, that this example was selected, because
---in contrast to Dirichlet boundary conditions---%
natural boundary conditions $\frac{\partial u}{\partial n} =0$
do not require any approximation for neural network discretizations
which was not covered in the error analysis.

As inexact auxiliary PDE we will use the same differential
operator $-\Delta + \sigma$ but a scaled right hand side
$\tilde{f} = (1-\varepsilon) f$. Then we can explicitly compute the
PDE error term
\begin{align*}
  \Epde
  = \|f-\tilde{f}\|_{L^2(\Omega)}
  = \varepsilon\|f\|_{L^2(\Omega)}.
\end{align*}

In all experiments we generate data by creating $m$ uniformly
distributed random points $p_1,\dots,p_m$
and use cubes $B_i$ with edge length $\hat{l}$ centered at
the points $p_i$.
The points are sampled in $[\hat{l}/2, 1-\hat{l}/2]^d$
such that $B_i \subset \Omega$ is guaranteed.
The edge length $\hat{l}$ and the penalty parameter $\delta$
are selected according to the heuristic strategy
proposed in Subsection~\ref{subsec:parameter_strategy}
for different values of $Q$.
Notice that the heuristic strategy does not guarantee
a \emph{non-degenerate} and \emph{well-balanced} problem
in the sense of Definition~\ref{def:nondegenerate_wellbalanced}.
Nevertheless, the method is still covered
by the theory presented above, but the constants may
blow up if the strategy fails, leading to different error rates.

\subsection{Finite element discretization}

In this section we discretize the problem with conforming
finite elements for $d=1,2,3$.
Since we are not interested in illustrating
well known finite element error bounds,
we use a fixed finite element grid with uniformly
spaced interval/rectangular/hexahedral elements and tensorial
$\mathcal{Q}_k$ Lagrange finite elements of order $k$
throughout the experiments.
For $d=1$ we used $64$ elements and order $k=4$,
for $d=2$ we used $4069 = 64 \times 64$ elements and order $k=4$,
and for $d=3$ we used $4069 = 16 \times 16 \times 16$ elements and order $k=2$.
In any case the discretization error can be neglected
compared to the PDE and data error terms reported
in the following.
All reported $L^2(\Omega)$-errors are computed using a quadrature
rule of order $k^2$ on the grid elements.

For the first experiment we fix $\tilde{f} = 0.5 f$
and compute the solution for increasing number of data
points $m$.
The computations are done using exact data, i.e.,
$b_i = \FINT_{B_i}u$ and inexact data in the form
of point values $b_i = f(p_i)$.
Figure~\ref{fig:fem_error_over_datapoints} depicts
the error over $m$
for dimensions $d=1,2,3$ and parameters $Q=4,2$
in the heuristic strategy.
Left and right picture show exact and inexact data, respectively.
We observe that the error decays like
$O(m^{-2/d})= O(\hat{R}^2)$ as expected from
the theoretical error bounds for exact data
and uniformly distributed data points.
We also find that the error increases by a constant
if we increase $Q=|\hat{K}_i|/|B_i|$
which is again in accordance with the error estimate where
this enters in the constant factor $\eta$.
For inexact data, the situation is very similar.
We observe the order $O(m^{-2/d})= O(\hat{R}^2)$
but for very small errors obtained for $d=1$
eventually the case $Q=4$ becomes better compared
to $Q=2$.
This can be explained by the fact that, while increasing
$Q$ increases the constant in the PDE error term, it also
decreases $r$ and thus the data error term.
Hence, if the data error comes in the range of the
PDE error, we expect that larger $Q$ reduces the total error.

In the second experiment we fix $m=512$ data points
and compute the solution for inexact right hand sides
$\tilde{f} = (1-\varepsilon) f$ with varying $\varepsilon$
and exact data $b_i = \FINT_{B_i}u$ as well as inexact
data $b_i = f(p_i)$.
Figure~\ref{fig:fem_error_over_pde} depicts
the error over the PDE error $\varepsilon=\Epde/\|f\|_{L^2(\Omega)}$
for dimensions $d=1,2,3$ and parameters $Q=4,2$
in the heuristic strategy.
Left and right picture show exact and inexact data, respectively.
Here we observe that the error scales like
$O(\varepsilon)=O(\Epde)$ for a wide range of $\varepsilon$.
This is expected from the theoretical error bound.
Again, we observe that for small total error $Q=4$ is better
compared to $Q=2$ where the error finally saturates for $d=2,3$.
The latter indicates that the data error
starts to dominate in this regime such that we no longer benefit
from improving the PDE error.

\begin{figure}
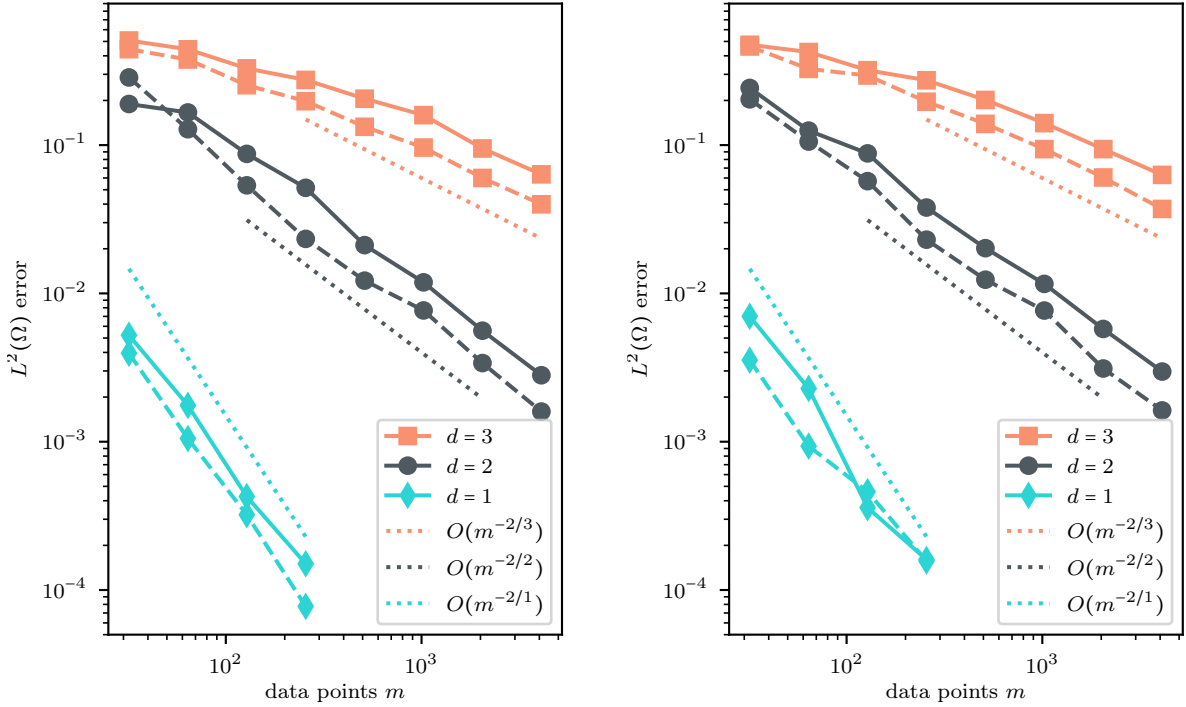

  \centering%
  \input{FEMerrorOverDataAverages.pgf}%
  \hfill
  \input{FEMerrorOverDataPointValues.pgf}%
  \caption{%
    Finite element discretization.
    Total error over number of data points.
    Solid lines: $Q=4$. Dashed lines: $Q=2$. Dotted lines: Reference slope.
    Left: Exact local average data.
    Right: Point values as inexact data.
  }
  \label{fig:fem_error_over_datapoints}%
\end{figure}

\begin{figure}
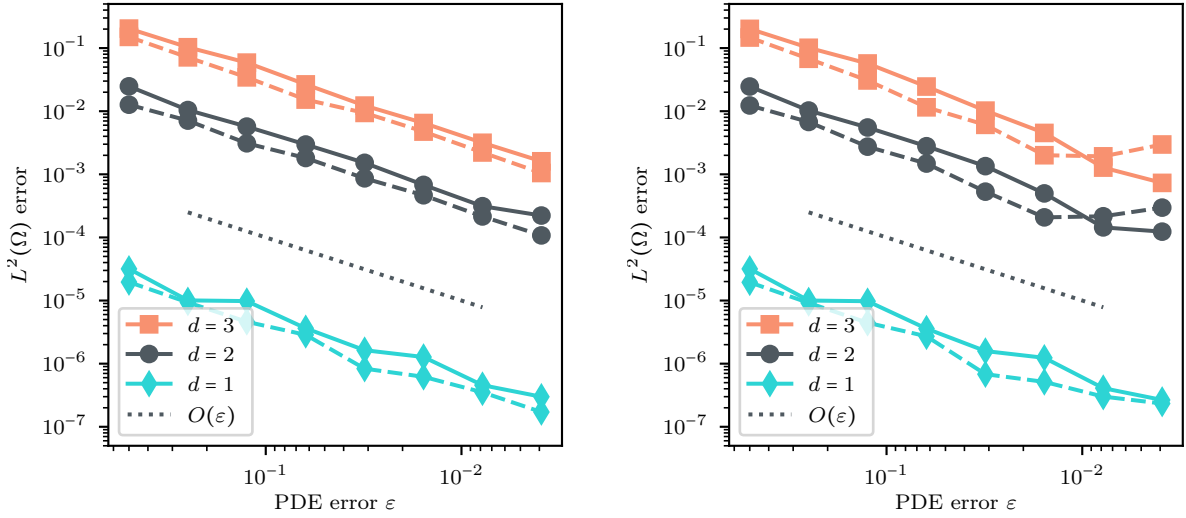

  \centering%
  \input{FEMerrorOverPDEAverages.pgf}%
  \hfill
  \input{FEMerrorOverPDEPointValues.pgf}%
  \caption{%
    Finite element discretization.
    Total error over PDE error for exact local average data.
    Solid lines: $Q=4$. Dashed lines: $Q=2$. Dotted lines: Reference slope.
    Left: Exact local average data.
    Right: Point values as inexact data.
  }
  \label{fig:fem_error_over_pde}%
\end{figure}

\subsection{Neural network discretization}

Finally we consider the discretization of the test problem
with neural networks. To this end we minimize the loss
functional $\tilde{J}$ defined in~\eqref{eq:functional}
over a set $V_h$ of neural networks with a fixed architecture.
In this case integrals are no longer
evaluated exactly, but approximated using stochastic
integration as proposed in~\cite{EYu2018}.
More precisely, we approximate the local averages $\FINT_{B_i} v$
appearing in the functional by averaging $v$ over $10d$
uniformly distributed sampling points in $B_i$.
Since all sets $B_i$ are the same up to translation,
we also translated the sampling points. The integration
over $\Omega$ was approximated using $100d$ uniformly distributed
sampling points in $\Omega$ that are newly sampled in each step
of the iterative algebraic solution method.
Since we use natural boundary conditions, we only need to
approximate the space $H^1(\Omega)$ without any hard boundary
constraints. Hence no approximation of boundary conditions
was needed.

The network architecture is as follows:
A first layer inflates the input size to $16$.
This is followed by $3$ blocks, each consisting of two densely
connected layers with a residual connection.
A final affine layer reduces the size from $16$ to $1$.
In total the network takes the form
\begin{align*}
  F = F_\text{out} \circ F_3 \circ F_2 \circ F_1 \circ F_\text{in}
\end{align*}
where $F_\text{in} : \R^d \to \R^{16}$ is a linear map padding its
input by $16-d$ zeros, $F_\text{out} : \R^{16} \to \R$ is an affine map,
and each block $F_i : \R^{16} \to \R^{16}$ has the form
\begin{align*}
  F_i (x) = \psi(W_{i,2} \psi(W_{i,1} x - \theta_{i,1}) - \theta_{i,2})+x
\end{align*}
for dense weight matrices $W_{i,j} \in \R^{16\times 16}$ and bias vectors
$\theta_{i,j} \in \R^{16}$.
In all layers we used the activation function $\psi(x) = \max\{x^3,0\}$.
All parameters involved in $F_1, F_2,F_3,F_\text{out}$ are determined
in the training procedure.

The networks are trained using the Adam method~\cite{KingmaBa2014}
which is a variant of stochastic gradient descent.
Training was stopped if the
$L^2(\Omega)$ error $\|\tilde{u}_h-u\|_{L^2(\Omega)}$
did not decrease any more. While this is in general impractical
due to the unknown solution $u$, we used this here to avoid
effects of more heuristic stopping criteria.

Again we fix $\tilde{f} = 0.5 f$
and compute the solution for increasing number of data
points $m$.
The computations are done using exact data, i.e.,
$b_i = \FINT_{B_i}u$ and inexact data in the form
of point values $b_i = f(p_i)$.
Figure~\ref{fig:nn_error_over_datapoints} depicts
the error over $m$
for dimensions $d=1,2,3$ and parameters $Q=4,2$
in the heuristic strategy.
Left and right picture show exact and inexact data, respectively.
For $d=2$ and $d=3$ we again observe that the error decays like
$O(m^{-2/d})= O(\hat{R}^2)$.
For $d=1$ the situation is less clear.
While we roughly observe the order $O(m^{-2/d})= O(\hat{R}^2)$ again,
there are some exceptions. Most importantly the error increases
after adding more data in one case.

\begin{figure}
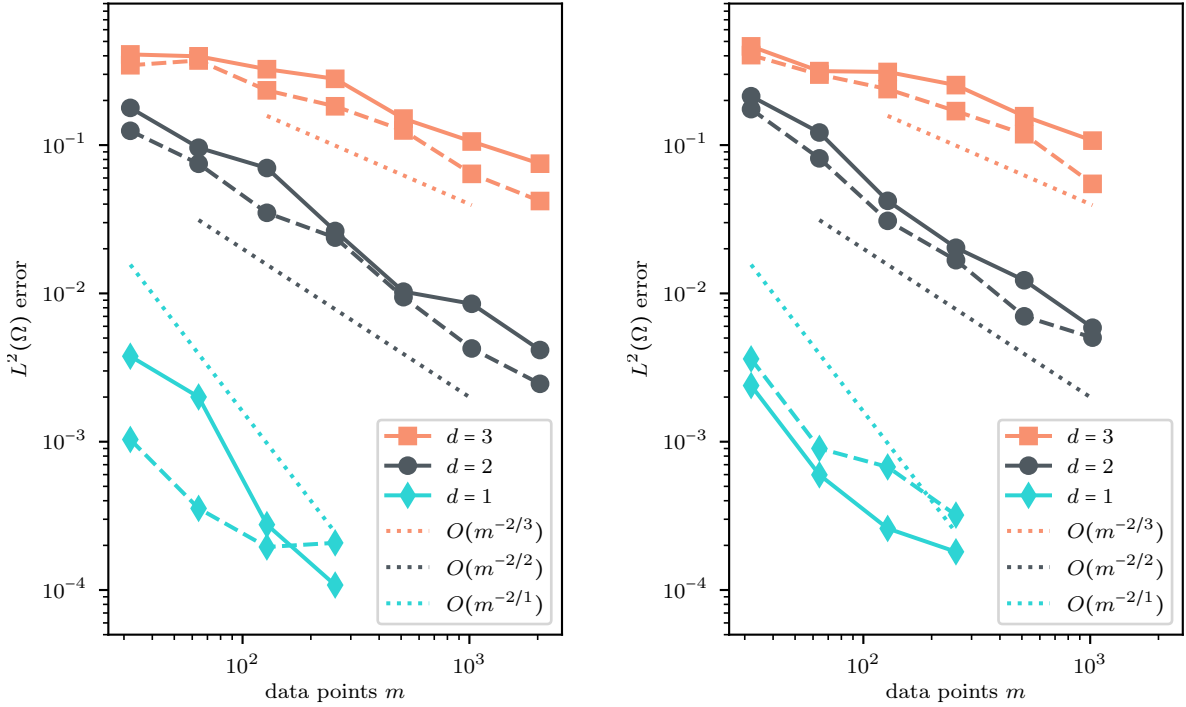

  \centering%
  \input{NNerrorOverDataAverages.pgf}%
  \hfill
  \input{NNerrorOverDataPointValues.pgf}%
  \caption{%
    Neural network discretization.
    Total error over number of data points.
    Solid lines: $Q=4$. Dashed lines: $Q=2$. Dotted lines: Reference slope.
    Left: Exact local average data.
    Right: Point values as inexact data.
  }
  \label{fig:nn_error_over_datapoints}%
\end{figure}

\bibliography{paper}
\bibliographystyle{plain}

\end{document}